\newtheorem{Counter}{!!!!!!!}
\newtheorem{Thm}[Counter]{Theorem}
\newtheorem{Prop}[Counter]{Proposition}
\newtheorem{Cor}[Counter]{Corollary}
\newtheorem{Lem}[Counter]{Lemma}
\newtheorem{Rem}[Counter]{Remark}
\begin{document}

\title {Free knot linear interpolation and the Milstein scheme for stochastic differential equations}

\author{
Mehdi Slassi \thanks{Fachbereich Mathematik, Technische Universit\"at Darmstadt, Schlo\ss gartenstra\ss e 7, 64289 Darmstadt, Germany}
}


\date{\today}

\maketitle 

\begin{abstract}
The main purpose of this paper is to give a solution to a long-standing unsolved problem concerning the pathwise strong approximation of stochastic differential equations with respect to the global error in the $L_{\infty}$-norm. Typically, one has average supnorm error of order $\left(\ln k/k\right)^{1/2}$ for standard approximations of SDEs with $k$ discretization points, like piecewise interpolated Ito-Taylor schemes. On the other hand there is a lower bound, which indicates that the order  $1/\sqrt{k}$ is best possible for spline approximation of SDEs with $k$ free knots. The present paper deals with the question of how to get an implementable method, which achieves the order $1/\sqrt{k}$. Up to now, papers with regard to this issue give only pure existence results and so are inappropriate for practical use. In this paper we introduce a nonlinear method for approximating a scalar SDE. The method combines a Milstein scheme with a pieceweise linear interpolation of the Brownian motion with free knots and is easy to implement. Moreover, we establish sharp lower and upper error bounds with specified constants which exhibit the influence of the coefficients of the equation. 
\medskip

\noindent
{\bf Keywords:}
 Stochastic differential equation; pathwise uniform approximation; linear interpolation; free knots.
\end{abstract} 
\section{Introduction}
\let\languagename\relax 
Consider a scalar stochastic differential equation (SDE)
\begin{equation}
dX\left(t\right)=a\left(t,X\left(t\right)\right)dt+\sigma\left(t,X\left(t\right)\right)dW\left(t\right),\qquad t\geq 0 \label{eq1}\end{equation}
with initial value $X(0)$. Here $W=\left(W\left(t\right)\right)_{t\geq 0}$
denotes a one-dimensional Brownian motion on a probability space $\left(\Omega,\mathcal{F},\mathbb{P}\right)$. We study pathwise approximation of equation (\ref{eq1}) on the unit interval by polynomial splines with free knots.

For $k\in\mathbb{N}$ and $r\in\mathbb{N}$ we let $\Pi_{r}$
denote the set of polynomials of degree at most $r$ and we consider
the space $\Phi_{k,r}$ of polynomial splines $\varphi$ of degree
at most $r$ with $k-1$ free knots, i.e., \[
\varphi=\sum_{j=1}^{k}\mathbf{1}_{\left]t_{j-1}, t_{j}\right]}\cdot\pi_{j},\]
where $0=t_{0}<\cdots<t_{k}=1$ and $\pi_{1},\ldots,\pi_{k}\in\Pi_{r}$.
Then, any approximation method $\widehat{X}_{k}$ by splines with
$k-1$ free knots can be thought of as a mapping \[
\widehat{X}_{k}\;:\Omega\longrightarrow\Phi_{k,r},\]
and we denote this class of mappings by $\mathfrak{N}_{k,r}$.

Let $X$ and $\widehat{X}_{k}$ denote the strong solution and an approximate solution on $\left[0,1\right]$, respectively. For the pathwise error we consider the distance
in $L_{\infty}$-norm\[
\bigl\Vert X-\widehat{X}_{k}\bigr\Vert_{L_{\infty}\left[0,1\right]}=\sup_{0\leq t\leq1}\bigl|X\left(t\right)-\widehat{X}_{k}\left(t\right)\bigr|,\]
 and we define the error $e_{q}\bigl(\widehat{X}_{k}\bigr)$ of the approximation
$\widehat{X}_{k}$ by averaging over all trajectories, i.e., \begin{equation}
e_{q}\bigl(\widehat{X}_{k}\bigr)=\left(E^{*}\bigl\Vert X-\widehat{X}_{k}\bigr\Vert_{L_{\infty}\left[0,1\right]}^{q}\right)^{1/q},\qquad  1\leq q<\infty.\label{eq2}\end{equation}
Here we use the outer expectation value $E^{*}$ in order to avoid
cumbersome measurability considerations. The reader is referred to
\cite{WellnerVanderVaart:96} for a detailed study of the outer integral
and expectation.

Furthermore, we define the minimal error
\begin{equation} 
e_{k,q}^{\min}\left(X\right) = \inf \{e_{q}\bigl(\widehat{X}_{k}\bigr) : \widehat{X}_{k} \in \mathfrak{N}_{k,r} \},
\end{equation} 
i.e., the $q$-average $L_{\infty}$-distance of the solution $X$ to the spline space  $\Phi_{k,r}$.

Note, that spline approximation with free knots is a nonlinear approximation problem in the sense that the approximants do not come from linear spaces, but rather from nonlinear manifolds $\Phi_{k,r}$. Nonlinear approximation
for deterministic functions has been extensively discussed, see \cite{Devore:98}
for a survey. In the context of stochastic processes much less is
known and we refer the reader to \cite{CohenD'Ales:97, CohenDaubechiesGuleryuz:02, CreutzigGronbachRitter:07, konPlaskota:05,Slassi:12}. At first in \cite{konPlaskota:05} and thereafter in \cite{CreutzigGronbachRitter:07, Slassi:12}
approximation by splines with free knots is studied, while wavelet
methods are employed in \cite{CohenD'Ales:97,CohenDaubechiesGuleryuz:02}.

In the sequel, for two sequences $\left(a_{k}\right)_{k\in\mathbb{N}}$
and $\left(b_{k}\right)_{k\in\mathbb{N}}$ of positive real numbers
we write $a_{k}\approx b_{k}$ if $\lim_{k\to\infty}a_{k}/b_{k}=1$ 
and $a_{k}\gtrsim b_{k}$ if $\liminf_{k\to\infty}a_{k}/b_{k}\geq1$.
Additionally $a_{k}\asymp b_{k}$ means $C_{1}\leq a_{k}/b_{k}\leq C_{2}$
for all $k\in\mathbb{N}$ and some positive constants $C_{i}$.

Typically, linear splines with fixed knots or with sequential selection of knots are used to approximate the solution of SDEs globally on a time interval. Such approximations are also considered in the present paper. 

For $k\in \mathbb{N}$  we use $\widehat{X}_{k}^{e}$  to denote the piecewise interpolated Euler scheme with constant step-size  $1/k$. In \cite{Faure:01} Faure has given an upper
bound \begin{equation}
e_{q}\left(\widehat{X}_{k}^{e}\right)\leq C\cdot\left(\ln k/k\right)^{1/2}\label{order}\end{equation}
with an unspecified constant $C$. In \cite{Gronbach:02} M\"uller-Gronbach  
has determined the strong asymptotic behaviour of $e_{q}\left(\widehat{X}_{k}^{e}\right)$
with an explicitly given constant, namely\[
e_{q}\left(\widehat{X}_{k}^{e}\right)\approx \frac{C_{q}^{e}}{\sqrt{2}}\cdot\left(\ln k/k\right)^{1/2}\]
with \[
C_{q}^{e}=\left(E\left\Vert \sigma\right\Vert _{L_{\infty}\left[0,1\right]}^{q}\right)^{1/q},\]
where $\left\Vert \sigma\right\Vert _{L_{\infty}\left[0,1\right]}=\sup_{t\in\left[0,1\right]}\left|\sigma\left(t,X\left(t\right)\right)\right|.$

Now, we analyze approximations that are based on a sequential
selection of knots to evaluate $W$, see \cite{Gronbach:02}
for a formal definition. This includes numerical methods with adaptive discretization that reflects the local smoothness of the solution. In \cite{Gronbach:02} M\"uller-Gronbach
shows that a step size proportional to the inverse of the current value of $\left|\sigma\right|^{2}$  leads to an asymptotically optimal method $\widehat{X}_{k}^{a}$, more precisely 
\begin{equation}
e_{q}\left(\widehat{X}_{k}^{a}\right)\approx \frac{C_{q}^{a}}{\sqrt{2}}\cdot\left(\ln k/k\right)^{1/2}\label{klaus1}
\end{equation}
 and \[
C_{q}^{a}=\left(E\left\Vert \sigma\right\Vert _{2}^{q}\right)^{1/q},\]
where $\left\Vert \sigma\right\Vert _{2}=\left(\int_{0}^{1}\left(\sigma\left(t,X\left(t\right)\right)\right)^{2}\,dt\right)^{1/2}.$
Moreover, he establishes strong asymptotic optimality of the sequence
$\widehat{X}_{k}^{a}$, i.e., for every sequence of methods $\widehat{X}_{k}$
that use $k$ sequential observations of $W$ \begin{equation}
e_{q}\left(\widehat{X}_{k}\right)\gtrsim \frac{C_{q}^{a}}{\sqrt{2}}\cdot\left(\ln k/k\right)^{1/2}.\label{klaus2}\end{equation}
Typically $C_{q}^{a}<C_{q}^{e}$ and $C_{q}^{a}>0$, which means that the convergence order $ \left(\ln k/k\right)^{1/2}$ cannot be improved by sequential observation of $W.$

In the present paper we do not impose any restriction on the selection
of the knots, i.e., we assume to have complete information about the
individual paths of $W$ and $X$.

On the other hand we know from Creutzig et al. \cite{CreutzigGronbachRitter:07} that\begin{equation}
e_{k,q}^{\min}\left(X\right)\asymp\left(1/k\right)^{1/2}.\label{klaus3}\end{equation}
Hence the order $1/\sqrt{k}$ is best possible for spline approximation of SDEs with $k-1$ free knots. We add that the same order
of convergence is achieved by the average Kolmogorov widths, see \cite{Creutig:02,Mairove:93,Mairove:96}. 

In the present paper we address the open question how to get an implementable method with error of order $\left(1/k\right)^{1/2}$. Up to now, there are only pure existence results available in literature.
We introduce an approximation method $\widetilde{X}_{\varepsilon}^{**}$ which combines a Milstein scheme with a free knot linear interpolation of the Brownian motion $W$ with guaranteed a priori given accuracy $\varepsilon$. The method $\widetilde{X}_{\varepsilon}^{**}$ progresses from the left to the right and is easy to implement. For the error of $\widetilde{X}_{\varepsilon}^{**}$  we demonstrate the strong asymptotic behaviour with an explicitely given constant, namely
\begin{equation}
 \lim _{\varepsilon \to 0} \left(1/\varepsilon\right) \cdot e_{q}\left(\widetilde{X}_{\varepsilon}^{**}\right) = \left(E\left\Vert \sigma\right\Vert _{L_{\infty}\left[0,1\right]}^{q}\right)^{1/q}\label{approxi5I}
\end{equation}
and with probability one
\begin{equation}
 \lim _{\varepsilon \to 0} \left(1/\varepsilon\right) \cdot \left\Vert X-\widetilde{X}_{\varepsilon}^{**}\right\Vert _{L_{\infty}\left[0,1\right]}  = \left\Vert \sigma\right\Vert _{L_{\infty}\left[0,1\right]},\label{approxi6I}
\end{equation}
where
\begin{equation}
 \tau_{1,1}=\inf \{t>0:\;\sup_{0\leq s\leq t}\bigl|W\left(s\right) - \frac{s}{t}\cdot W\left(t\right)\bigr|>1\}. \label{I3} 
\end{equation}
As a rough measure for the computational cost we use the expectation of the number of free knots $N_{\varepsilon}$ used by $\widetilde{X}_{\varepsilon}^{**}$ pathwise. We show that
\begin{equation} 
\lim_{\varepsilon \to 0}\varepsilon^{2}\cdot E\left( N_{\varepsilon}\right) =  \frac{\pi^{2}}{ 14\cdot\zeta\left(3\right)}\label{I1}
\end{equation}
and with probability one 
\begin{equation}  
\lim_{\varepsilon \to 0}\varepsilon^{2}\cdot N_{\varepsilon} = \frac{\pi^{2}}{ 14\cdot\zeta\left(3\right)}.\label{I2}
\end{equation}  
Here $\zeta\left(\cdot\right)$ denotes the Riemann zeta function.

The structure of the paper is as follows. In Section 2 we present a free knot linear interpolation method $\widetilde {W}_{\varepsilon}$ of the Brownian motion $W$ with guaranteed accuracy $\varepsilon$ on the interval $[0,1]$. Furthermore, we address the following question: What is the computational cost necessary to achieve the error $\varepsilon$? In Section 3 we specify our assumptions regarding the equation (\ref{eq1}). The drift and diffusion coefficients must satisfy Lipschitz conditions and the initial value must have a finite $q$-moment for all $q\geq 1$. Moreover, we introduce the approximation method $\widetilde{X}_{\varepsilon}^{**}$ and determine the strong asymptotic behaviour of the error of $\widetilde{X}_{\varepsilon}^{**}$. The Appendix is devoted to the analysis of the approximation error of the Milstein method with random step size, which is useful for our main result.  
\section{Free knot linear interpolation of the Brownian motion} 
In this Section we introduce a free knot linear interpolation  $\widetilde{W}_{\varepsilon}$  of $W$ with guaranteed accuracy $\varepsilon$. The way of choosing the knots is motivated by the method of free knot spline approximation introduced in \cite{CreutzigGronbachRitter:07}. Note that piecewise linear interpolation of the Brownian motion at free knots with pathwise guaranteed accuracy has also been used in \cite{Grill:87} to study rates of convergence in the functional law of the iterated logarithm.

Let $ \widetilde{W}_{s,t}$ denote the linear interpolation of $W$ at $s$ and $t$.\\ Given error bound $\varepsilon >0,$ we define a sequence of stopping times by $\tau_{0,\varepsilon}=0$ and for $j\in \mathbb{N}$
\begin{equation}
\tau_{j,\varepsilon}=\inf\left\{ t>\tau_{j-1,\varepsilon}\;\mid\;\left\Vert W-\widetilde{W}_{\tau_{j-1,\varepsilon},\;t}\right\Vert _{L_{\infty}\left[\tau_{j-1,\varepsilon},\; t\right]}>\varepsilon\right\}.\label{stop1}
\end{equation}
For $j\in\mathbb{N}$ we define\[ 
\xi_{j,\varepsilon}=\tau_{j,\varepsilon}-\tau_{j-1,\varepsilon}.\]
These random variables yield the lengths of consecutive maximal subintervals 
that permit piecewise linear interpolation with error
at most $\varepsilon$. For every $\varepsilon>0$ the random variables
$\xi_{j,\varepsilon}$ form an i.i.d. sequence with \begin{equation}
\xi_{j,\varepsilon}\stackrel{d}{=}\varepsilon^{2}\cdot\tau_{1,1}\quad\mathrm{and}\quad E\left(\left(\tau_{1,1}\right)^{m}\right)<\infty \label{asymptotic9}
\end{equation}
for every $m\in\mathbb{N}$, see \cite{Ehlenz:08, Grill:87}.

The following Lemma yields the distribution of the stopping time $\tau_{1,1}$. 
\begin{Lem} \label{Verteilung}
\begin{enumerate}
\item For all $x>0$ we have $$ \mathbb{P} \left(\tau_{1,1}< x \right) = 1-k\left(1/\sqrt{x}\right),$$ where 
 $$ K\left(x\right)= \left\{ \begin{array}{ll}
\sum_{i=-\infty}^{\infty}\left(-1\right)^{i} e^{-2i^{2}\cdot x^{2}} &  \mathrm{for}\; x>0\\
 0 & \mathrm{for}\; x\leq 0 \end{array}\right. $$ is the Kolmogorov distribution function. 
\item We have $$ E\left(\tau_{1,1}\right)=  \left(14\cdot\zeta\left(3\right)\right)/\pi^{2},$$ where
 $$ \zeta\left(s\right) =\sum_{n=1}^{\infty}\frac{1}{n^{s}}$$ is the Riemann zeta function.
\end{enumerate}
\end{Lem}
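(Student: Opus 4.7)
The plan is to deduce Part~1 from Kolmogorov's classical distribution of the supremum of a standard Brownian bridge, and then to obtain Part~2 by integrating the tail of $\tau_{1,1}$ via the Jacobi theta (dual) representation of $K$.

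For Part~1, write $D(t) = \sup_{0\le s\le t}|W(s) - (s/t)W(t)|$, so that $\tau_{1,1} = \inf\{t > 0 : D(t) > 1\}$. The crux is the almost-sure identification
\[
\{\tau_{1,1} \ge x\} = \{D(x) \le 1\}.
\]
One inclusion is immediate from the definition of the infimum. For the reverse, one must argue that a.s.\ $D(x)\le 1$ forces $D(t)\le 1$ for all $t\le x$; I expect this is the step that requires invoking the analysis in \cite{Grill:87} and \cite{Ehlenz:08}, where such an equivalence is set up using Brownian scaling and the strong Markov property. Granting the identification, the process $s \mapsto W(s)-(s/x)W(x)$ on $[0,x]$ is a Brownian bridge, so by $(W(xu))_{u}\stackrel{d}{=}\sqrt{x}\,(W(u))_{u}$ one has $D(x) \stackrel{d}{=} \sqrt{x}\,\|B^{br}\|_\infty$ for a standard bridge $B^{br}$ on $[0,1]$. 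Kolmogorov's classical formula then yields
\[
\mathbb{P}(D(x)\le 1) = \mathbb{P}(\|B^{br}\|_\infty \le 1/\sqrt{x}) = K(1/\sqrt{x}),
\]
and Part~1 follows.

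For Part~2, by the tail formula and Part~1,
\[
E(\tau_{1,1}) = \int_0^\infty \mathbb{P}(\tau_{1,1} > t)\,dt = \int_0^\infty K(1/\sqrt{t})\,dt.
\]
To evaluate this I would apply Jacobi's theta identity (equivalently, Poisson summation on the defining series of $K$) to obtain the dual expansion
\[
K(x) = \frac{\sqrt{2\pi}}{x}\sum_{k=1}^\infty e^{-(2k-1)^2\pi^2/(8x^2)}, \qquad x>0,
\]
so that $K(1/\sqrt{t}) = \sqrt{2\pi t}\sum_{k\ge 1} e^{-(2k-1)^2\pi^2 t/8}$. All summands are nonnegative, so monotone convergence permits term-by-term integration: with $\int_0^\infty \sqrt{t}\,e^{-\lambda t}\,dt = \sqrt{\pi}/(2\lambda^{3/2})$ and $\lambda_k = (2k-1)^2\pi^2/8$, each term contributes $16/((2k-1)^3\pi^2)$, giving
\[
E(\tau_{1,1}) = \frac{16}{\pi^2}\sum_{k=1}^\infty \frac{1}{(2k-1)^3} = \frac{16}{\pi^2}\cdot\frac{7}{8}\zeta(3) = \frac{14\zeta(3)}{\pi^2},
\]
where $\sum_{k\ge 1}(2k-1)^{-3} = (1-2^{-3})\zeta(3) = (7/8)\zeta(3)$ follows from splitting $\zeta(3)$ by parity.

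The hard part is the reverse inclusion in Part~1: the functional $t \mapsto D(t)$ is not obviously monotone in $t$, so a purely pathwise argument is not at hand, and one really needs the Brownian-motion-specific structure from the cited references. Everything else is a standard Kolmogorov-distribution computation together with a termwise integration.
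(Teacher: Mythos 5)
Your route is the same as the paper's for both parts: Part~1 via the Brownian bridge identification, Brownian scaling and Kolmogorov's formula (the paper cites \cite{Billingsley:68} for exactly the step you cite it for), and Part~2 via the tail formula, the theta-function dual expansion of $K$, termwise integration, and $\sum_{k\ge 1}(2k-1)^{-3}=\tfrac{7}{8}\zeta(3)$. Your Part~2 agrees with the paper's computation line for line and is complete modulo Part~1.

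The one step you flag and defer --- the reverse inclusion $\{D(x)\le 1\}\subseteq\{\tau_{1,1}\ge x\}$ --- is a genuine gap, and you should know that the paper does not fill it either: its proof simply opens with the equality $\mathbb{P}(\tau_{1,1}<x)=\mathbb{P}\bigl(\sup_{0\le s\le x}|W(s)-\tfrac{s}{x}W(x)|>1\bigr)$ with no justification, and \cite{Billingsley:68} is invoked only for the law of the bridge supremum. Moreover, your suspicion that $t\mapsto D(t)$ is not monotone is correct, and the failure is not a removable technicality that the references could absorb. Take the polygonal path $p$ with $p(0)=0$, $p(1/2)=9/10$, $p(1)=-9/10$, $p(2)=0$: then $\sup_{0\le s\le 2}|p(s)-\tfrac{s}{2}p(2)|=9/10<1$, while $|p(1/2)-\tfrac12 p(1)|=27/20>1$, so $D(1)>1>D(2)$ for this path and for every path in a small sup-norm neighbourhood of it. Since Wiener measure charges every such neighbourhood, the event $\{\tau_{1,1}<2\}\setminus\{D(2)>1\}$ has positive probability, so the identification $\{\tau_{1,1}\ge x\}=\{D(x)\le 1\}$ cannot hold even up to null sets; the reverse inclusion is not merely hard, it is false, and hence $\mathbb{P}(\tau_{1,1}<x)>1-K(1/\sqrt{x})$ for such $x$. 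What \cite{Grill:87} and \cite{Ehlenz:08} actually supply, and what the rest of the paper genuinely uses, is the i.i.d.\ scaling structure (\ref{asymptotic9}) and the exponential tail bound (\ref{asymptotic10}), not an exact Kolmogorov law for $\tau_{1,1}$. So your write-up correctly isolates the weak point, but ``granting the identification'' is not an option here: that identification is where both your argument and the paper's break down.
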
 
\begin{proof}
\textbf{ad(1)} At first, note that 
$$ \tau_{1,1}=\inf \{t>0:\;\sup_{0\leq s\leq t}\bigl|W\left(s\right) - \frac{s}{t}\cdot W\left(t\right)\bigr|>1 \}.$$
Then, we have for all $x>0$
\begin{eqnarray*}
 \mathbb{P}\left(\tau_{1,1} <x\right) &=&  \mathbb{P}\left( \sup_{0\leq s\leq x}\bigl|W\left(s\right) - \frac{s}{x}\cdot W\left(x\right)\bigr|>1\right)\\
                                           &=& \mathbb{P}\left( \sup_{0\leq s\leq 1}\bigl|W\left(s\cdot x\right) - s\cdot W\left(x\right)\bigr|>1\right)\\
                                            &=& \mathbb{P}\left( \sup_{0\leq s\leq 1}\Bigl|\frac{W\left(s\cdot x\right)}{\sqrt{x}} - \frac{s\cdot W\left(x\right)}{\sqrt{x}}\Bigr|>\frac{1}{\sqrt{x}}\right)\\
                                           &=& 1- \mathbb{P}\left( \sup_{0\leq s\leq 1}\Bigl|\frac{W\left(s\cdot x\right)}{\sqrt{x}} - \frac{s\cdot W\left(x\right)}{\sqrt{x}}\Bigr|\leq\frac{1}{\sqrt{x}}\right).
\end{eqnarray*}
From this follows the first assertion in Lemma \ref{Verteilung}, see \cite{Billingsley:68}.\\
\textbf{ad(2)}  At first, standard relation for theta functions yields $$ K\left(x\right) = \frac{\sqrt{2\pi}}{x}\sum_{i=1}^{\infty}e^{-\left(2i-1\right)^{2}\pi^{2}/\left(8x^{2}\right)}.$$
 Then, we have
\begin{eqnarray*}
E\left(\tau_{1,1}\right)   &=& \int_{0}^{\infty} \mathbb{P}\left(\tau_{1,1}>x\right)\,dx\\
                                &=& \int_{0}^{\infty}K\left(1/\sqrt{x}\right)\,dx \\
                                &=&\sqrt{2\pi}\cdot \int_{0}^{\infty}\sqrt{x}\sum_{i=1}^{\infty}e^{-\left(\left(2i-1\right)^{2}\pi^{2}/8\right)\cdot x}\,dx\\
                                &=&\sqrt{2\pi}\cdot \sum_{i=1}^{\infty}\left(\frac{8}{\left(2i-1\right)^{2}\pi^{2}}\right)^{3/2}\cdot \Gamma\left(\frac{1}{2}+1\right)\\
                                &=& \frac{14}{\pi^{2}}\cdot\zeta\left(3\right). 
\end{eqnarray*}
This completes the proof.
\end{proof}
\begin{figure}[h]
\begin{center}
The probability density curve of the stopping time $\tau_{1,1}$
\rotatebox{0}{\begin{minipage}{5cm} 
\scalebox{.3}{\includegraphics{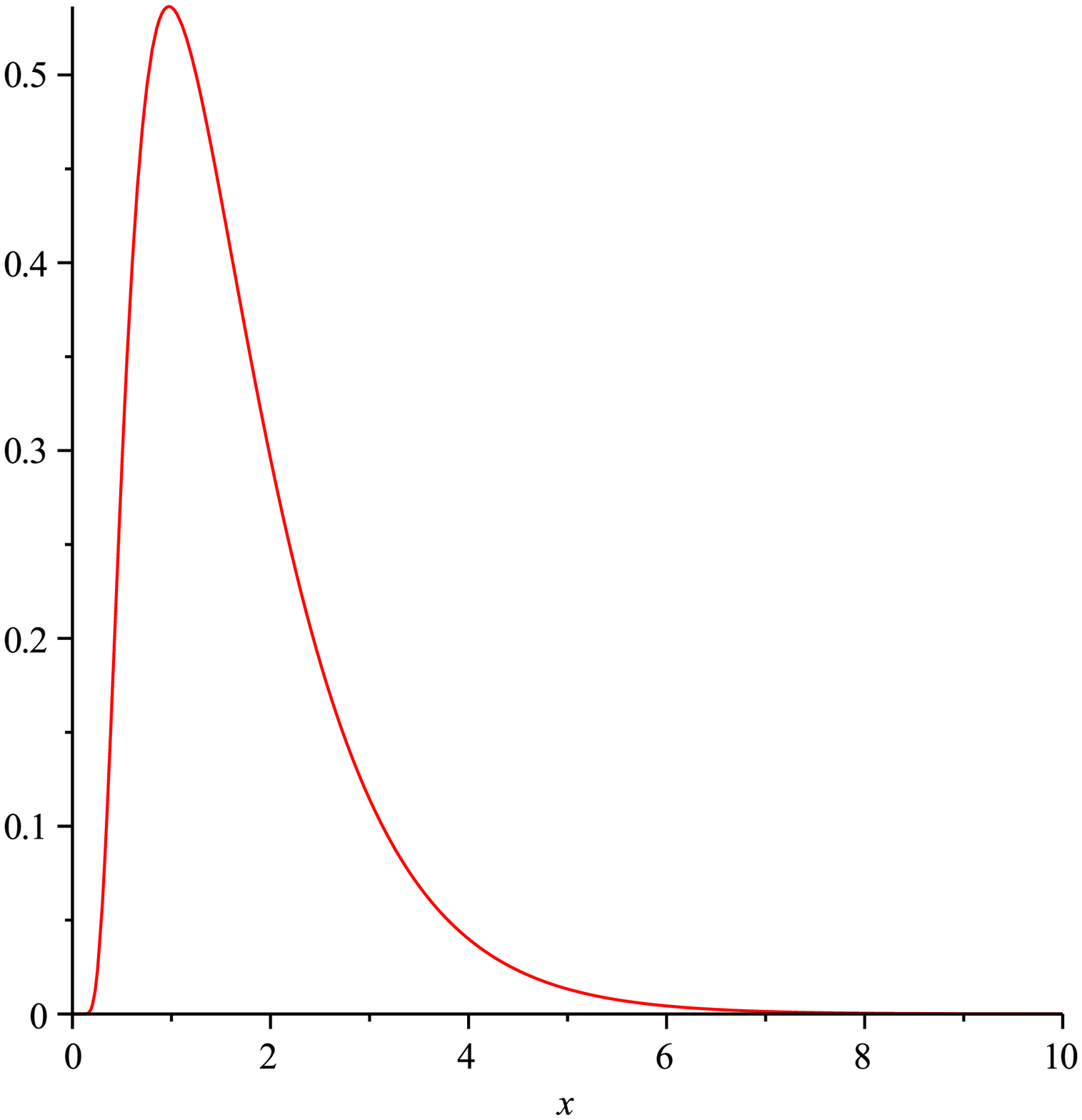}}
\end{minipage}}
\end{center}
\end{figure}
\newpage
We define 
\begin{equation} 
N_{\varepsilon}=N_{\varepsilon}\left(W\right) =\sup \{ n\geq 0 \;:\; S_{n}<1 \},\label{stop2}
\end{equation}
where
$$ S_{0}=0 \qquad \mathrm{and}\qquad S_{n}=\sum_{j=1}^{n}\xi_{j,\varepsilon}, \quad n \in \mathbb{N}.$$
Note, that we have $$E\left(N_{\varepsilon}\right)=\sum_{n=1}^{\infty}\mathbb{P}\left(S_{n}\leq 1 \right)$$
and hence $$ E\left(N_{\varepsilon}\right)<\infty.$$
A free knot linear interpolation of $W$ with guaranteed accuracy $\varepsilon$ on $\left[0,1\right] $ is given by $$\widetilde{W}_{\varepsilon}=\sum_{j=1}^{N_{\varepsilon}+1}\mathbf{1}_{\left]\tau_{j-1,\varepsilon}, \tau_{j,\varepsilon}\right]}\cdot\widetilde{W}_{\tau_{j-1,\varepsilon},\tau_{j,\varepsilon}}.$$ We have 
\begin{equation}
\left\Vert W-\widetilde{W}_{\varepsilon}\right\Vert _{L_{\infty}\left[\tau_{j-1,\varepsilon},\tau_{j,\varepsilon}\right]} = \varepsilon \label{interpo3}\end{equation}
for $j=1,\ldots,N_{\varepsilon}+1$. Note, that $ N_{\varepsilon}$ is the number of free knots on $\left[0,1\right]$ used by $\widetilde{W}_{\varepsilon}$.
\begin{Prop}\label{LemN} For the random variable $ N_{\varepsilon}$ we have:
\begin{enumerate}
\item  $ N_{\varepsilon} \to \infty$, as $\varepsilon \to 0$  a.s.
\item $\lim_{\varepsilon \to 0}\varepsilon^{2}\cdot E\left(N_{\varepsilon}\right) = 1/E\left(\tau_{1,1}\right)$. 
\item $\lim_{\varepsilon \to 0}\varepsilon^{2}\cdot N_{\varepsilon} = 1/E\left(\tau_{1,1}\right)$  a.s. 
\end{enumerate}
\end{Prop}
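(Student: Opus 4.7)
The plan is to recognize $N_{\varepsilon}$ as a classical renewal counting process evaluated at time $1/\varepsilon^{2}$, and then read off (1)--(3) from standard renewal theory. By the scaling relation in (\ref{asymptotic9}), the variables
$$ \eta_{j} := \xi_{j,\varepsilon}/\varepsilon^{2}, \qquad j \in \mathbb{N}, $$
form an i.i.d.\ sequence distributed as $\tau_{1,1}$, and crucially the law of $\eta_{j}$ does not depend on $\varepsilon$. Setting $T_{n} = \sum_{j=1}^{n}\eta_{j}$ and $N(t) = \sup\{n \geq 0 : T_{n} < t\}$, the definition (\ref{stop2}) yields $N_{\varepsilon} = N(1/\varepsilon^{2})$. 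Finally, $\mu := E(\tau_{1,1}) = 14\,\zeta(3)/\pi^{2} \in (0,\infty)$ by Lemma \ref{Verteilung}.

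For (1), I would use the strong law of large numbers, which yields $T_{n}/n \to \mu > 0$ a.s., so $T_{n} \to \infty$ a.s. Since $N(t) \geq n$ whenever $T_{n} < t$, this forces $N(t) \to \infty$ a.s.\ as $t \to \infty$, and substituting $t = 1/\varepsilon^{2}$ gives (1). For (3), the standard sandwich $T_{N(t)} \leq t < T_{N(t)+1}$ divided by $N(t)$ yields
$$ \frac{T_{N(t)}}{N(t)} \;\leq\; \frac{t}{N(t)} \;<\; \frac{T_{N(t)+1}}{N(t)+1}\cdot\frac{N(t)+1}{N(t)}. $$
Using (1) together with the SLLN applied along the random index $N(t)$, both sides converge a.s.\ to $\mu$, hence $t/N(t) \to \mu$ a.s., and inserting $t = 1/\varepsilon^{2}$ gives (3).

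For (2), the cleanest route is the elementary renewal theorem, $E(N(t))/t \to 1/\mu$. Alternatively, since Part (3) already gives a.s.\ convergence of $\varepsilon^{2} N_{\varepsilon}$ to $1/\mu$, it suffices to establish uniform integrability of the family $\{\varepsilon^{2} N_{\varepsilon} : \varepsilon > 0\}$. Here one exploits that $E\bigl((\tau_{1,1})^{m}\bigr) < \infty$ for every $m \in \mathbb{N}$: via Wald's identity applied to $T_{N(t)+1}$, or a direct second-moment estimate on the renewal function, one obtains $\sup_{\varepsilon > 0} E\bigl((\varepsilon^{2} N_{\varepsilon})^{2}\bigr) < \infty$, which yields uniform integrability and hence convergence of the first moments to $1/\mu = \pi^{2}/(14\,\zeta(3))$.

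The main technical point will be the uniform integrability step in (2) (or equivalently, a self-contained proof of the elementary renewal theorem in this setting); parts (1) and (3) are direct consequences of the SLLN once the renewal reformulation $N_{\varepsilon} = N(1/\varepsilon^{2})$ is in place.
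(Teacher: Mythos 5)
Your overall strategy---recasting $N_{\varepsilon}$ in renewal-theoretic terms, proving (3) by the sandwich $S_{N_{\varepsilon}}<1\leq S_{N_{\varepsilon}+1}$ plus the SLLN, and getting (2) from Wald/the elementary renewal theorem---is the same in spirit as the paper's. But the reduction you build everything on has a genuine gap for the two almost-sure statements. You set $\eta_{j}=\xi_{j,\varepsilon}/\varepsilon^{2}$ and write $N_{\varepsilon}=N(1/\varepsilon^{2})$ for ``the'' renewal process $N$, noting that the \emph{law} of $\eta_{j}$ is free of $\varepsilon$. The random variables $\eta_{j}$ themselves, however, and hence the process $N(\cdot)$, do depend on $\varepsilon$: for each $\varepsilon$ you have a different i.i.d.\ sequence on the same probability space. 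The classical a.s.\ limits $N(t)\to\infty$ and $t/N(t)\to E(\tau_{1,1})$ therefore hold, for each fixed $\varepsilon$, outside a null set that may vary with $\varepsilon$, and since $\varepsilon$ ranges over an uncountable set you cannot simply ``substitute $t=1/\varepsilon^{2}$'' to obtain (1) and (3). This is precisely the point the paper attends to in part (1): it first proves $N_{\varepsilon}\to\infty$ in probability, via the exponential tail bound (\ref{asymptotic10}) applied to $\mathbb{P}(S_{n}\leq 1)\geq(1-\exp(-C/(n\varepsilon^{2})))^{n}$, and then upgrades to a.s.\ convergence using that $N_{\varepsilon}$ is monotone in $\varepsilon$. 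Your part (1) is missing such a step, and your part (3) inherits the same defect when the SLLN is applied ``along the random index'' to a family of sequences indexed by $\varepsilon$. The gap is reparable---for instance, establish the limit along a geometric subsequence $\varepsilon_{k}=\theta^{k}$ by Chebyshev and Borel--Cantelli (using $E((\tau_{1,1})^{m})<\infty$), interpolate between subsequence points by monotonicity of $\varepsilon\mapsto N_{\varepsilon}$, and let $\theta\to 1$---but some such ingredient must be supplied.

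For part (2) your reduction is sound, since $E(N_{\varepsilon})$ depends only on the distribution of $N_{\varepsilon}$, which does coincide with that of a single fixed renewal process evaluated at $t=1/\varepsilon^{2}$; invoking the elementary renewal theorem, or your alternative route via a.s.\ convergence plus uniform integrability from a second-moment bound, is legitimate. The paper instead gives a short self-contained argument: Wald's identity applied to $S_{N_{\varepsilon}+1}$ yields the two-sided bound (\ref{asymptotic11}), which is essentially the textbook proof of the elementary renewal theorem. If you carry out your uniform-integrability variant, the place where work is needed is exactly the one you identified: controlling the overshoot term $E(\xi_{N_{\varepsilon}+1,\varepsilon})$, which is subject to the inspection paradox and must be bounded using the higher moments of $\tau_{1,1}$ rather than treated as a typical increment.
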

\begin{proof}
\textbf{ad(1)} For all $n\in \mathbb{N}$ we have $$ \mathbb{P}\left(N_{\varepsilon}<n\right)= 1- \mathbb{P}\left(S_{n}\leq 1\right).$$ We use (\ref{asymptotic9}) and  (\ref{asymptotic10}) to obtain
\begin{eqnarray*} 
\mathbb{P}\left(S_{n}\leq 1\right) &\geq & \left(\mathbb{P}\left(\xi_{1,\varepsilon}\leq \frac{1}{n}\right)\right)^{n}\\
                                  & \geq & \left(1-\exp\left(-\frac{C}{n\cdot\varepsilon^{2}}\right)\right)^{n}.
\end{eqnarray*}
This yields $$ \lim_{\varepsilon \to 0}\mathbb{P}\left(N_{\varepsilon}<n \right) = 0 $$
for all $n\in \mathbb{N}$, so $N_{\varepsilon}\to \infty$, as $\varepsilon \to 0$ in probability. Since $N_{\varepsilon}$ is increasing, holds  $N_{\varepsilon}\to \infty$, as $\varepsilon \to 0$ almost surely.\\
\textbf{ad(2)} Note, that $N_{\varepsilon} +1$ is a stopping time.Therefore we use the Wald's equation to obtain $$E\left(S_{N_{\varepsilon}+1}\right)= E\left(N_{\varepsilon}+1\right)\cdot E\left(\tau_{1,\varepsilon}\right)$$ and hence  
\begin{equation} 
\frac{1}{E\left(\tau_{1,\varepsilon}\right)} -1 \leq E\left(N_{\varepsilon}\right) \leq  \frac{1+ E\left(\xi_{N_{\varepsilon}+1,\,\varepsilon}\right)}{E\left(\tau_{1,\varepsilon}\right)} -1.\label{asymptotic11}  
\end{equation}
Since $\mathbf{1}_{\{N_{\varepsilon} =n\}}$ and $\xi_{n+1,\varepsilon}$ are independent for all $n\geq 0$, we get
$$ E\left(\xi_{N_{\varepsilon}+1,\,\varepsilon}\right) = E\left(\tau_{1,\varepsilon}\right)$$ and hence from (\ref{asymptotic11}) $$\lim_{\varepsilon \to 0}\varepsilon^{2}\cdot E\left(N_{\varepsilon}\right) = 1/E\left(\tau_{1,1}\right)$$ by (\ref{asymptotic9}).\\
\textbf{ad(3)} Due to $S_{N_{\varepsilon}}<1 \leq S_{N_{\varepsilon}+1}$ we have 
$$ \frac{\left(1/\varepsilon^{2}\right)\cdot S_{N_{\varepsilon}}}{N_{\varepsilon}}< \frac{1/\varepsilon^{2}}{N_{\varepsilon}}\leq \frac{\left(1/\varepsilon^{2}\right)\cdot S_{N_{\varepsilon}+1}}{N_{\varepsilon}+1} \cdot\frac{N_{\varepsilon}+1}{N_{\varepsilon}}.$$
Since  $ N_{\varepsilon} \to \infty$ a.s. as $\varepsilon \to 0$ we get by the strong law of large numbers that
$$ \lim_{\varepsilon \to 0} \frac{1}{\varepsilon^{2}\cdot N_{\varepsilon}} = E\left(\tau_{1,1}\right)\quad \mathrm{a.s.}$$
\end{proof}
\begin{Rem}\label{remark}
For $j\in\mathbb{N}$ we define $$
\Lambda_{j,\varepsilon}=\frac{W\left(\tau_{j,\varepsilon}\right)-W\left(\tau_{j-1,\varepsilon}\right)}{\left(\tau_{j,\varepsilon}-\tau_{j-1,\varepsilon}\right)^{1/2}}.$$ From \cite{Ehlenz:08} we know that for every $\varepsilon >0$  the random variables $\Lambda_{j,\varepsilon}$ form an i.i.d. sequence with
$$ \Lambda_{j,\varepsilon} \stackrel{d}{=} N\left(0,1\right).$$
Moreover, $\left(\Lambda_{j,\varepsilon}\right)_{j\in \mathbb{N}}$ and $\left(\xi_{j,\varepsilon}\right)_{j\in \mathbb{N}}$ are independent.  See \cite{Grill:87} too.\\
Using these facts, Lemma \ref{Verteilung}, (\ref{asymptotic9}) and (\ref{interpo3}) it is sufficient to generate realizations of the stopping time  $ \tau_{1,1}$ and standard normally distributed random variables to simulate the free knot linear interpolation of Brownian paths with guaranteed accuracy $\varepsilon$ on $\left[0,1\right]$.
\end{Rem}
\section{The main result}
In this Section we present the asymptotic analysis for the approximation method $\widetilde{X}_{\varepsilon}^{**}$. The method  $\widetilde{X}_{\varepsilon}^{**}$ combines a Milstein scheme with the free knot linear interpolation of the Brownian motion introduced in Section 2.

Throughout this paper we assume that the drift and diffusion coefficient $$a,\:\sigma:\left[0,\infty \right)\times\mathbb{R}\to\mathbb{R}$$
and the initial value $X\left(0\right)$ have following properties.
\begin{itemize}
\item $\mathrm{(A)}$ Both, $a$ and $\sigma$ are differentiable with respect 
to the state variable. Moreover, there exists a constant $K>0$ such
that $f=a$ and $f=\sigma$ satisfy\begin{eqnarray*}
\left|f\left(t,x\right)-f\left(t,y\right)\right| & \leq & K\cdot\left|x-y\right|,\\
\left|f\left(s,x\right)-f\left(t,x\right)\right| & \leq & K\cdot\left(1+\left|x\right|\right)\cdot\left|s-t\right|,\\
\left|f^{\left(0,1\right)}\left(t,x\right)-f^{\left(0,1\right)}\left(t,y\right)\right| & \leq & K\cdot\left|x-y\right|\end{eqnarray*}
for all $s,\; t\in\left[0,\infty \right)$ and $x,\; y\in\mathbb{R}.$ 
\item $\mathrm{(B)}$ The initial value $X\left(0\right)$ is independent
of $W$ and \[
E\left(\left|X\left(0\right)\right|^{q}\right)<\infty\qquad\mathrm{for\; all}\quad q\geq1.\]
\end{itemize}
Note, that $\mathrm{(A)}$ yields the linear growth condition, i.e., there exists a constant $c > 0$ such that \begin{equation}
\left|f\left(t,x\right)\right|\leq c\cdot\left(1+\left|x\right|\right)\label{growth}\end{equation} for all $ t \in \left[0,\infty\right)$ and $x\in\mathbb{R}.$ 
Moreover, $f^{\left(0,1\right)}$ is bounded and \[
\left|f\left(t,x\right)-f\left(t,y\right)-f^{\left(0,1\right)}\left(t,y\right)\left(x-y\right)\right|\leq c\cdot\left(x-y\right)^{2}.\]
Given the above properties $\mathrm{(A)}$ and $\mathrm{(B)}$, a pathwise unique strong solution of equation
(\ref{eq1}) on $\left[0,1\right]$ with initial value $X\left(0\right)$ exists. In particular
the conditions assure that\begin{equation}
E\left(\left\Vert X\right\Vert _{L_{\infty}\left[0,1\right]}^{q}\right)<\infty\qquad\mathrm{for\; all}\; q\geq1.\label{eqEX}\end{equation}

\emph{Construction of the approximation method} $\widetilde{X}_{\varepsilon}^{**}$. Put  
\begin{equation}
\tau_{\ell}=\tau_{\ell,\varepsilon}\label{discre2}
\end{equation} 
for $\ell = 0,\dots,N_{\varepsilon}$ and $\tau_{N_{\varepsilon}+1}=1$. We take the Milstein scheme to compute an approximation to $X$ at
the discrete points $\tau_{\ell}$. This scheme is defined by $$\breve{X}_{\varepsilon}\left(\tau_{0}\right) =X\left(0\right)$$
and
\begin{eqnarray}
\breve{X}_{\varepsilon}\left(\tau_{\ell}\right)  &= &  \breve{X}_{\varepsilon}\left(\tau_{\ell-1}\right)+ a\left(\tau_{\ell-1},\breve{X}_{\varepsilon}\left(\tau_{\ell-1}\right)\right)\cdot\left(\tau_{\ell}-\tau_{\ell-1}\right) \nonumber \\
& & +\sigma\left(\tau_{\ell-1},\breve{X}_{\varepsilon}\left(\tau_{\ell-1}\right)\right)\cdot\left(W\left(\tau_{\ell}\right)-W\left(\tau_{\ell-1}\right)\right)  \label{milstein}\\
& & + \frac{1}{2}\cdot\left(\sigma\cdot\sigma^{\left(0,1\right)}\right)\left(\tau_{\ell-1},\breve{X}_{\varepsilon}\left(\tau_{\ell-1}\right)\right)\cdot\left(\left(W\left(\tau_{\ell}\right)-W\left(\tau_{\ell-1}\right)\right)^{2}-\left(\tau_{\ell}-\tau_{\ell-1}\right)\right),\nonumber
\end{eqnarray} 
where $\sigma^{\left(0,1\right)}$ denotes the partial derivate of
$\sigma$ with respect to the second or state variable.\\
Now, method  $\widetilde{X}_{\varepsilon}^{**}$ is given by\[
\widetilde{X}_{\varepsilon}^{**}\left(\tau_{0}\right)=X\left(0\right)\]
and for $t\in\left]\tau_{\ell-1},\tau_{\ell}\right]$
 \begin{eqnarray}
\widetilde{X}_{\varepsilon}^{**}\left(t\right) & = &  \breve{X}_{\varepsilon}\left(\tau_{\ell-1}\right)+ a\left(\tau_{\ell-1},\breve{X}_{\varepsilon}\left(\tau_{\ell-1}\right)\right)\cdot\left(t-\tau_{\ell-1}\right) \nonumber \\
                                       & & +\sigma\left(\tau_{\ell-1},\breve{X}_{\varepsilon}\left(\tau_{\ell-1}\right)\right)\cdot\left(\widetilde{W}_{\varepsilon}\left(t\right) -W\left(\tau_{\ell-1}\right)\right).  \label{linearsplineinter1}
\end{eqnarray}
 Observe that method $\widetilde{X}_{\varepsilon}^{**}$ uses $N_{\varepsilon}$ free knots pathwise and its computational cost is given by $E\left(N_{\varepsilon}\right)$. On the basis of above preparations the main result can now be stated.
\begin{Thm}
\label{theorem1-4} The method $\widetilde{X}_{\varepsilon}^{**}$ satisfies
\begin{equation}
 \lim _{\varepsilon \to 0} \left(1/\varepsilon \right) \cdot e_{q}\left(\widetilde{X}_{\varepsilon}^{**}\right) =\left(E\left\Vert \sigma\right\Vert _{L_{\infty}\left[0,1\right]}^{q}\right)^{1/q}\label{approxi3}
\end{equation}
and 
\begin{equation}
 \lim _{\varepsilon \to 0} \left(1/\varepsilon \right) \cdot \left\Vert X-\widetilde{X}_{\varepsilon}^{**}\right\Vert _{L_{\infty}\left[0,1\right]}  = \left\Vert \sigma\right\Vert _{L_{\infty}\left[0,1\right]}\quad \mathrm{a.s.}\label{approxi4}
\end{equation}
for all $q\geq 1$ and every equation (\ref{eq1}).  
\end{Thm}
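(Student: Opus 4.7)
The plan is to treat $\widetilde X_\varepsilon^{**}$ as a piecewise linear-in-$\widetilde W_\varepsilon$ continuation of a Milstein scheme on the random mesh $(\tau_\ell)$ and show that, once the (small) Milstein approximation error at the grid points is accounted for, the pathwise $L_\infty$-error is asymptotically driven by the pure interpolation error
\[
B_\ell(t)=\sigma\bigl(\tau_{\ell-1},\breve X_\varepsilon(\tau_{\ell-1})\bigr)\cdot\bigl(W(t)-\widetilde W_\varepsilon(t)\bigr),\qquad t\in\,]\tau_{\ell-1},\tau_\ell].
\]
The identity (\ref{interpo3}) is the crucial exact equality: on each mesh interval the sup of $|W-\widetilde W_\varepsilon|$ is exactly $\varepsilon$, so $\sup_{[\tau_{\ell-1},\tau_\ell]}|B_\ell|=\varepsilon\cdot\bigl|\sigma(\tau_{\ell-1},\breve X_\varepsilon(\tau_{\ell-1}))\bigr|$. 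Maximising over $\ell$ and using continuity of $\sigma(\cdot,X(\cdot))$ on the increasingly dense mesh will then yield $\varepsilon\cdot\|\sigma\|_{L_\infty[0,1]}+o(\varepsilon)$.

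Concretely, I would apply It\^o to $X$ on $[\tau_{\ell-1},t]$ and subtract (\ref{linearsplineinter1}) to obtain
\begin{align*}
X(t)-\widetilde X_\varepsilon^{**}(t)
&=\bigl(X(\tau_{\ell-1})-\breve X_\varepsilon(\tau_{\ell-1})\bigr)+B_\ell(t)\\
&\quad+\int_{\tau_{\ell-1}}^{t}\bigl[a(s,X(s))-a(\tau_{\ell-1},\breve X_\varepsilon(\tau_{\ell-1}))\bigr]\,ds\\
&\quad+\int_{\tau_{\ell-1}}^{t}\bigl[\sigma(s,X(s))-\sigma(\tau_{\ell-1},\breve X_\varepsilon(\tau_{\ell-1}))\bigr]\,dW(s).
\end{align*}
The first summand is controlled by the random-step Milstein analysis of the Appendix: since $\tau_\ell-\tau_{\ell-1}\stackrel{d}{=}\varepsilon^2\tau_{1,1}$ has all moments finite (\ref{asymptotic9}) and $N_\varepsilon=O(\varepsilon^{-2})$ by Proposition \ref{LemN}, one expects $\max_\ell|X(\tau_\ell)-\breve X_\varepsilon(\tau_\ell)|=O(\varepsilon^2)$ both in $L_q$ and almost surely, which is $o(\varepsilon)$. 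The two integrals are then expanded one further It\^o--Taylor step and bounded via (A), the Burkholder-Davis-Gundy inequality and H\"older, using $|X(s)-X(\tau_{\ell-1})|=O_{L_q}(\varepsilon)$ on each interval: the drift integral is $O(\varepsilon^2)$, and the stochastic integral splits into the Milstein quadratic term of order $O(\varepsilon^2)$ plus a higher-order remainder. Maximising over $\ell\le N_\varepsilon+1$ with the moment bounds (\ref{eqEX}), (\ref{asymptotic9}) and a union bound controlled by the geometric growth of $N_\varepsilon$ gives that the sup over $[0,1]$ of the sum of the non-$B_\ell$ contributions is $o(\varepsilon)$, both a.s.\ and in $L_q$.

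It remains to identify the limit of $\max_\ell|\sigma(\tau_{\ell-1},\breve X_\varepsilon(\tau_{\ell-1}))|$. Since $\max_\ell(\tau_\ell-\tau_{\ell-1})\to 0$ a.s.\ (a direct consequence of (\ref{asymptotic9}) and a Borel--Cantelli argument) and $\max_\ell|\breve X_\varepsilon(\tau_{\ell-1})-X(\tau_{\ell-1})|\to 0$ a.s.\ by the Milstein bound above, continuity of $\sigma$ and of $t\mapsto X(t)$ forces this maximum to converge almost surely to $\|\sigma(\cdot,X(\cdot))\|_{L_\infty[0,1]}$, which gives (\ref{approxi4}). The $L_q$-statement (\ref{approxi3}) then follows from dominated convergence once uniform integrability of $\varepsilon^{-q}\|X-\widetilde X_\varepsilon^{**}\|^q_{L_\infty[0,1]}$ is verified via the linear growth (\ref{growth}), the moment bound (\ref{eqEX}) and the standard $L_q$-estimate for $\breve X_\varepsilon$. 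The technical heart of the argument, and the main obstacle, is exactly this random-mesh Milstein analysis: one must retain the strong local rate $O(\varepsilon^2)$ after passage to a maximal norm over $O(\varepsilon^{-2})$ \emph{random} intervals while simultaneously producing uniform-integrability bounds, which is what the Appendix is designed to provide.
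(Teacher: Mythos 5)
Your proposal follows essentially the same route as the paper: the paper introduces the time-continuous Milstein process $X^{M}_{\varepsilon}$ and splits the error into $X-X^{M}_{\varepsilon}$ (shown to be $O((\varepsilon\ln(1/\varepsilon))^{2})=o(\varepsilon)$ via the Appendix) plus $X^{M}_{\varepsilon}-\widetilde{X}_{\varepsilon}^{**}$, which is exactly your interpolation term $B_{\ell}$ plus the Milstein quadratic remainder, and then identifies the limit of $\max_{\ell}|\sigma(U_{\ell-1})|$ by continuity, dominated convergence and Fatou --- the same decomposition you obtain by subtracting (\ref{linearsplineinter1}) from the It\^o representation of $X$. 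Your claimed local rate $O(\varepsilon^{2})$ is slightly optimistic (the paper gets $(\varepsilon\ln(1/\varepsilon))^{2}$ in $L_{q}$ and $\varepsilon^{2-\kappa}$ almost surely), but both are $o(\varepsilon)$, so the argument is unaffected.
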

\begin{proof}
In order to prove the main results given in Theorem \ref{theorem1-4},
we introduce processes $X^{M}_{\varepsilon}$ as follows. For $\varepsilon >0$
let \[
0=\tau_{0}<\tau_{1}<\cdots<\tau_{N_{\varepsilon}+1}=1\]
be the discretization (\ref{discre2}) of $\left[0,1\right]$. Now, the
process $X^{M}_{\varepsilon}$ is given by $X^{M}_{\varepsilon}\left(0\right)=X\left(0\right)$
and for $t\in\left[\tau_{\ell-1},\tau_{\ell}\right]$\begin{eqnarray}
X^{M}_{\varepsilon}\left(t\right) & = & X^{M}_{\varepsilon}\left(\tau_{\ell-1}\right)+a\left(\tau_{\ell-1},X^{M}_{\varepsilon}\left(\tau_{\ell-1}\right)\right)\cdot\left(t-\tau_{\ell-1}\right)\nonumber \\
 &  & +\sigma\left(\tau_{\ell-1},X^{M}_{\varepsilon}\left(\tau_{\ell-1}\right)\right)\cdot\left(W\left(t\right)-W\left(\tau_{\ell-1}\right)\right)\label{milst} \\ 
 &  & +1/2\cdot\left(\sigma\cdot\sigma^{\left(0,1\right)}\right)\left(\tau_{\ell-1},X^{M}_{\varepsilon}\left(\tau_{\ell-1}\right)\right)\cdot\left(\left(W\left(t\right)-W\left(\tau_{\ell-1}\right)\right)^{2}-\left(t-\tau_{\ell-1}\right)\right).
\nonumber 
\end{eqnarray}
Note, that at the discretization points $\tau_{\ell}$ the processes $X^{M}_{\varepsilon}$ coincides with the Milstein scheme (\ref{milstein}).
Instead of estimating $X-\widetilde{X}_{\varepsilon}^{**}$
directly, we consider $X-X^{M}_{\varepsilon}$ as well as $X^{M}_{\varepsilon}-\widetilde{X}_{\varepsilon}^{**}$
separately. From Theorem \ref{theorem} in Appendix it follows that\[
\lim_{\varepsilon\to 0}\left(1/\varepsilon\right)\cdot\left(E\left\Vert X-X^{M}_{\varepsilon}\right\Vert _{L_{\infty}\left[0,1\right]}^{q}\right)^{1/q}=0,\]
and so $\left(E\left\Vert X^{M}_{\varepsilon}-\widetilde{X}_{\varepsilon}^{**}\right\Vert _{L_{\infty}\left[0,1\right]}^{q}\right)^{1/q}$
is  asymptotically the dominating term.

From now on let $C$ denote unspecified positive constants, which
only depend on the constant $K$ from condition $\mathrm{(A)}$ as
well as on $a\left(0,0\right),\;\sigma\left(0,0\right)$ and $E\bigl|X\left(0\right)\bigr|^{q}$.

\textbf{Proof of the upper bound in (\ref{approxi3}).} Put $U_{\ell}=\left(\tau_{\ell},X^{M}_{\varepsilon}\left(\tau_{\ell}\right)\right)$. Then for $t\in\left]\tau_{\ell-1},\tau_{\ell}\right]$ we have
\begin{eqnarray*}
\left|X^{M}_{\varepsilon}\left(t\right)-\widetilde{X}_{\varepsilon}^{**}\left(t\right)\right| &\leq &\left|\sigma\left(U_{\ell-1}\right)\cdot \left( W\left(t\right)-\widetilde{W}_{\varepsilon}\left(t\right)\right)\right| +\\
 &  &  \left|1/2\cdot\left(\sigma\cdot\sigma^{\left(0,1\right)}\right)\left(U_{\ell-1}\right)\cdot\left( \left(W\left(t\right) - W\left(\tau_{\ell-1}\right)\right)^{2}-\left(t-\tau_{\ell-1}\right)\right)\right|.
\end{eqnarray*}
Thus, by (\ref{interpo3})
\begin{eqnarray*}
\lefteqn{\left\Vert X^{M}_{\varepsilon}-\widetilde{X}_{\varepsilon}^{**}\right\Vert _{L_{\infty}\left[0,1\right]} \leq  \max_{1\leq\ell\leq N_{\varepsilon}+1}\left|\sigma\left(U_{\ell-1}\right)\right|\cdot \varepsilon + }\\
& & \max_{1\leq\ell\leq N_{\varepsilon}+1}\left(\left|1/2\cdot\left(\sigma\cdot\sigma^{\left(0,1\right)}\right)\left(U_{\ell-1}\right)\right|\cdot\sup_{\tau_{\ell-1} < t\leq \tau_{\ell}}\left|\left(W\left(t\right)-W\left(\tau_{\ell-1}\right) \right)^{2}-\left(t-\tau_{\ell-1}\right)\right|\right).
\end{eqnarray*} 
Minkowski's inequality yields\[
\left(E^{*}\left\Vert X^{M}_{\varepsilon}\left(t\right)-\widetilde{X}_{\varepsilon}^{**}\left(t\right)\right\Vert _{L_{\infty}\left[0,1\right]}^{q}\right)^{1/q}\leq I\left(\varepsilon\right)+J\left(\varepsilon\right),\]
where 
$$ I\left(\varepsilon\right)=\left(E\max_{1\leq\ell\leq N_{\varepsilon}+1}\left|\sigma\left(U_{\ell-1}\right)\right|^{q}\cdot\varepsilon^{q}\right)^{1/q}$$
and  
\begin{eqnarray*}
\lefteqn{J\left(\varepsilon\right)= }\\
& & \left(E\max_{1\leq\ell\leq N_{\varepsilon}+1}\left(\left|\frac{1}{2}\left(\sigma\cdot\sigma^{\left(0,1\right)}\right)\left(U_{\ell-1}\right)\right|\cdot\sup_{\tau_{\ell-1}< t\leq \tau_{\ell}}\left|\left(W\left(t\right) - W\left(\tau_{\ell-1}\right) \right)^{2}-\left(t-\tau_{\ell-1}\right)\right|\right)^{q}\right)^{1/q}.
\end{eqnarray*}
First, from the H\"older's inequality and the boundedness of $\sigma^{\left(0,1\right)}$ as well as the linear growth condition (\ref{growth}) it follows that 
\begin{eqnarray*}
J\left(\varepsilon\right)&\leq & C\cdot \Biggl[\left(E\left(1+\max_{1\leq\ell\leq N_{\varepsilon}+1}\left|X^{M}_{\varepsilon}\left(\tau_{\ell-1}\right) \right|^{2q}\right)\right)^{1/2q} \\
 & & \times \left( E\max_{1\leq\ell\leq N_{\varepsilon}+1}\sup_{\tau_{\ell-1}< t\leq \tau_{\ell}}\left|\left(W\left(t\right) -  W\left(\tau_{\ell-1}\right) \right)^{2}-\left(t-\tau_{\ell-1}\right)\right|^{2q}\right)^{1/2q}\Biggl].
\end{eqnarray*}
From Lemma \ref{lema1-1} in the Appendix we get on the one hand
\begin{align}
\left(E\left(1+\max_{1\leq\ell\leq N_{\varepsilon}+1}\left|X^{M}_{\varepsilon}\left(\tau_{\ell-1}\right) \right|^{2q}\right)\right)^{1/2q} & \leq \;
C\cdot \left(1+ \left(E\left\Vert X^{M}_{\varepsilon}\right\Vert _{L_{\infty}\left[0,1\right]}^{2q}\right)^{1/2q} \right) \leq  C.\label{S1}
\end{align}
On the other hand, according to the H\"older continuity of  $W$ there exists for every $\kappa \in \left(0,1/2\right)$ a nonnegative random variable $\eta_{\kappa}$ with $E\left(\left|\eta_{\kappa}\right|^{m}\right) <\infty$ for all $1\leq m<\infty$ so that we have  almost surely 
$$  \left|W\left(s+\tau_{\ell-1}\right) -  W\left(\tau_{\ell-1}\right)\right|\leq \eta_{\kappa} \cdot\left( \tau_{\ell}-\tau_{\ell-1}\right)^{\left(1-\kappa\right)/2}$$ for all $ s\in \left(0,\, \tau_{\ell}-\tau_{\ell-1}\right]$  and  $\ell \in  \{1,\ldots,N_{\varepsilon}+1\}$. From this and Lemma \ref{lema1}, it follows that
\begin{eqnarray}
 \lefteqn{\left( E\max_{1\leq\ell\leq N_{\varepsilon}+1}\sup_{\tau_{\ell-1}< t\leq \tau_{\ell}}\left|\left(W\left(t\right) - W\left(\tau_{\ell-1}\right) \right)^{2}-\left(t-\tau_{\ell-1}\right)\right|^{2q}\right)^{1/2q} }\nonumber\\
& & \leq C\cdot \left(\left(E \max_{1\leq \ell \leq k}\left(\tau_{\ell}-\tau_{\ell-1}\right)^{2q\cdot\left(1-\kappa\right)}\right)^{1/2q} + \left(E \max_{1\leq \ell \leq k}\left(\tau_{\ell}-\tau_{\ell-1}\right)^{2q}\right)^{1/2q}\right)\nonumber\\
& &\leq  C\cdot \left(\left(\varepsilon\cdot\ln\left(1/\varepsilon\right)\right)^{2\left(1-\kappa\right)}   + \left(\varepsilon\cdot \ln\left(1/\varepsilon\right)\right)^{2} \right). \label{S2}
\end{eqnarray}
Hence from  (\ref{S1}) and (\ref{S2}) we obtain 
\begin{equation}
\limsup_{\varepsilon \to0}\left(1/\varepsilon\right)\cdot J\left(\varepsilon\right)=0.\label{eqJk1}\end{equation}
By  Minkowski's inequality we have
\begin{eqnarray*}
\lefteqn{I\left(\varepsilon\right)\leq \varepsilon \cdot\Bigg( \left(E\max_{1\leq\ell\leq N_{\varepsilon}+1}\left|\sigma\left(U_{\ell-1}\right)-\sigma\left(\tau_{\ell-1},X\left(\tau_{\ell-1}\right)\right)\right|^{q}\right)^{1/q} +} \\
& &\left(E\max_{1\leq\ell\leq N_{\varepsilon}+1}\left|\sigma\left(\tau_{\ell-1},X\left(\tau_{\ell-1}\right)\right)\right|^{q}\right)^{1/q} \Bigg).
\end{eqnarray*}
On the one hand we use the Lipschitz conditions $\mathrm{(A)}$ and Theorem \ref{theorem} in the Appendix to obtain
\begin{equation}
 \left(E\max_{1\leq\ell\leq N_{\varepsilon}+1}\left|\sigma\left(U_{\ell-1}\right)-\sigma\left(\tau_{\ell-1},X\left(\tau_{\ell-1}\right)\right)\right|^{q}\right)^{1/q}
  \leq  C\cdot\left(\varepsilon\cdot \ln\left(1/\varepsilon\right)\right)^{2}\label{S0}
\end{equation}
and hence \begin{equation}
\limsup_{\varepsilon \to 0} \left(E\max_{1\leq\ell\leq N_{\varepsilon}+1}\left|\sigma\left(U_{\ell-1}\right)-\sigma\left(\tau_{\ell-1},X\left(\tau_{\ell-1}\right)\right)\right|^{q}\right)^{1/q}  = 0.
\label{eqJk2}
\end{equation}
On the other hand we have \[
\mathbb{P}\left(\lim_{\varepsilon\to 0}\max_{1\leq\ell\leq N_{\varepsilon}+1}\left|\sigma\left(\tau_{\ell-1},X\left(\tau_{\ell-1}\right)\right)\right|^{q}=\left\Vert \sigma\right\Vert _{L_{\infty}\left[0,1\right]}^{q}\right)=1,\]
and from (\ref{growth}) and (\ref{eqEX}) \[
\mathbb{P}\left(\left|\sigma\left(\tau_{\ell-1},X\left(\tau_{\ell-1}\right)\right)\right|^{q}\leq C\cdot\left(1+\left\Vert X\right\Vert _{L_{\infty}\left[0,1\right]}^{q}\right)\right)=1,\]
and\[
E\left(1+\left\Vert X\right\Vert _{L_{\infty}\left[0,1\right]}^{q}\right)<\infty.\]
Hence Lebesgue's theorem yields
\begin{equation}
\lim_{\varepsilon\to 0} \left(E\max_{1\leq\ell\leq N_{\varepsilon}+1}\left|\sigma\left(\tau_{\ell-1},X\left(\tau_{\ell-1}\right)\right)\right|^{q}\right)^{1/q}
= \left( E\left\Vert \sigma\right\Vert _{L_{\infty}\left[0,1\right]}^{q}\right)^{1/q}. \label{eqJk3}
\end{equation}
From this and (\ref{eqJk2}) we obtain\begin{equation}
\limsup_{\varepsilon\to 0}\left(1/\varepsilon\right)\cdot I\left(\varepsilon\right)\leq \left(E\left\Vert \sigma\right\Vert _{L_{\infty}\left[0,1\right]}^{q}\right)^{1/q}.\label{eqJk4}\end{equation}
Now, the upper bound in (\ref{approxi3})  
follows immediately from (\ref{eqJk1}) and (\ref{eqJk4}).

\textbf{Proof of the lower bound in (\ref{approxi3})}. We use (\ref{S1}), (\ref{S2}) and Theorem \ref{theorem} to obtain for every $\kappa \in \left(0,1/2\right)$ \[
\left(E\left\Vert X-\widetilde{X}_{\varepsilon}^{**}\right\Vert _{\infty}^{q}\right)^{1/q}\geq I\left(\varepsilon\right) - C\cdot \left(\varepsilon\cdot\ln\left(1/\varepsilon\right)\right)^{2\left(1-\kappa\right)} .\]
From this and (\ref{S0}) we get  $$\left(E\left\Vert X-\widetilde{X}_{\varepsilon}^{**}\right\Vert _{\infty}^{q}\right)^{1/q}\geq \left(E\max_{1\leq\ell\leq N_{\varepsilon}+1}\left|\sigma\left(\tau_{\ell-1},X\left(\tau_{\ell-1}\right)\right)\right|^{q} \cdot \varepsilon^{q}\right)^{1/q} - C\cdot\left(\varepsilon\cdot\ln\left(1/\varepsilon\right)\right)^{2\left(1-\kappa\right)}.$$ 
Hence Fatou's Lemma implies $$\liminf_{\varepsilon \to 0}\left(1/\varepsilon\right)\cdot\left(E\left\Vert X-\widetilde{X}_{\varepsilon}^{**}\right\Vert _{\infty}^{q}\right)^{1/q}\geq 
\left(E\left\Vert \sigma\right\Vert _{L_{\infty}\left[0,1\right]}^{q}\right)^{1/q},$$ which completes the proof of (\ref{approxi3}).
 
By using Corollary \ref{kor1} in Appendix and the same arguments as in the proof of (\ref{approxi3}) follows the assertion (\ref{approxi4}).
\end{proof}
\begin{Cor} From Proposition \ref{LemN} and Theorem \ref{theorem1-4} we deduce that\begin{equation}
 \lim _{\varepsilon \to 0} \sqrt{E\left(N_{\varepsilon}\right)} \cdot e_{q}\left(\widetilde{X}_{\varepsilon}^{**}\right) = \left(E\left(\tau_{1,1}\right)\right)^{-1/2}\cdot \left(E\left\Vert \sigma\right\Vert _{L_{\infty}\left[0,1\right]}^{q}\right)^{1/q}\label{approxi5}
\end{equation}
and
\begin{equation}
 \lim _{\varepsilon \to 0} \sqrt{N_{\varepsilon}} \cdot \left\Vert X-\widetilde{X}_{\varepsilon}^{**}\right\Vert _{L_{\infty}\left[0,1\right]}  = \left(E\left(\tau_{1,1}\right)\right)^{-1/2}\cdot\left\Vert \sigma\right\Vert _{L_{\infty}\left[0,1\right]}\quad \mathrm{a.s.}\label{approxi6}
\end{equation}
for all $q\geq 1$ and every equation (\ref{eq1})
\end{Cor}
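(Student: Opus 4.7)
The plan is to obtain both assertions by simply multiplying together the limits already established in Proposition \ref{LemN} and Theorem \ref{theorem1-4}, via the identities
\[
\sqrt{E\left(N_{\varepsilon}\right)}\cdot e_{q}\bigl(\widetilde{X}_{\varepsilon}^{**}\bigr)=\sqrt{\varepsilon^{2}\cdot E\left(N_{\varepsilon}\right)}\cdot\frac{1}{\varepsilon}\cdot e_{q}\bigl(\widetilde{X}_{\varepsilon}^{**}\bigr)
\]
and
\[
\sqrt{N_{\varepsilon}}\cdot\left\Vert X-\widetilde{X}_{\varepsilon}^{**}\right\Vert _{L_{\infty}\left[0,1\right]}=\sqrt{\varepsilon^{2}\cdot N_{\varepsilon}}\cdot\frac{1}{\varepsilon}\cdot\left\Vert X-\widetilde{X}_{\varepsilon}^{**}\right\Vert _{L_{\infty}\left[0,1\right]}.
\]

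For (\ref{approxi5}), I would apply Proposition \ref{LemN}(2) to conclude that the first factor $\sqrt{\varepsilon^{2}\cdot E\left(N_{\varepsilon}\right)}$ converges to $\left(E\left(\tau_{1,1}\right)\right)^{-1/2}$ as $\varepsilon\to 0$, and (\ref{approxi3}) of Theorem \ref{theorem1-4} to conclude that the second factor converges to $\left(E\left\Vert \sigma\right\Vert _{L_{\infty}\left[0,1\right]}^{q}\right)^{1/q}$. Since both limits are finite deterministic numbers, the product of the limits equals the limit of the product, giving the claim.

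For (\ref{approxi6}), the reasoning is identical but pathwise: Proposition \ref{LemN}(3) furnishes an almost sure set $\Omega_{1}$ on which $\varepsilon^{2}\cdot N_{\varepsilon}\to 1/E\left(\tau_{1,1}\right)$, and (\ref{approxi4}) of Theorem \ref{theorem1-4} furnishes an almost sure set $\Omega_{2}$ on which $\left(1/\varepsilon\right)\cdot\left\Vert X-\widetilde{X}_{\varepsilon}^{**}\right\Vert _{L_{\infty}\left[0,1\right]}\to\left\Vert \sigma\right\Vert _{L_{\infty}\left[0,1\right]}$. On the intersection $\Omega_{1}\cap\Omega_{2}$, which still has probability one, both convergences occur simultaneously, and the limit of the product is the product of the limits.

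There is essentially no obstacle here: the corollary is a direct algebraic consequence of the two previously established results, and the only point requiring (minimal) care is to observe that almost sure convergence is preserved under multiplication by intersecting the two full-measure events. No additional estimates, moment bounds, or uniform integrability arguments are needed, since Theorem \ref{theorem1-4} already delivers convergence of the $q$-th moment version of the error on its own.
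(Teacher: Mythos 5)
Your proof is correct and is exactly the argument the paper intends: the corollary is stated as an immediate deduction from Proposition \ref{LemN} and Theorem \ref{theorem1-4}, and your factorization $\sqrt{E(N_{\varepsilon})}\cdot e_{q}=\sqrt{\varepsilon^{2}E(N_{\varepsilon})}\cdot(1/\varepsilon)e_{q}$ (and its pathwise analogue on the intersection of the two full-measure events) is precisely how the two limits combine.
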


\section{Appendix} 
As previously, $C$ denotes unspecified positive constants, wich only depend on the constant  $K$ from condition $\mathrm{(A)}$ as
well as on $a\left(0,0\right),\;\sigma\left(0,0\right)$ and $E\bigl|X\left(0\right)\bigr|^{q}$. 

For $\varepsilon >0$ let  $X^{M}_{\varepsilon}$  denotes the process (\ref{milst}) with discretization (\ref{discre2}) \[
0=\tau_{0}<\tau_{1}<\cdots<\tau_{N_{\varepsilon}+1}.\] For $ \ell = 1,\ldots, N_{\varepsilon}$ put 
$$ \Delta_{\ell} = \tau_{\ell}-\tau_{\ell-1} \qquad \mathrm{and} \qquad \Delta_{N_{\varepsilon} + 1} = \xi_{N_{\varepsilon}+1,\varepsilon}.$$ Note, that $\Delta_{\ell}$ is a stopping time with respect to the right-continuous filtration generated by the Brownian motion $\left(W\left(t + \tau_{\ell-1}\right)- W\left( \tau_{\ell-1}\right)\right)_{t\geq 0}$.  We derive  upper bounds for 
$$ \left(E\left\Vert X-\overline{X}_{\varepsilon}\right\Vert _{L_{\infty}\left[0,1\right]}^{q}\right)^{1/q} $$ in terms of $$ C\cdot \left(\varepsilon\cdot \ln\left(1/\varepsilon\right)\right)^{2}.$$ We use this estimate in the analysis of the approximation method (\ref{linearsplineinter1}). In \cite{Slassi:12} an upper bound in the case of deterministic step size has been presented.
\textcompwordmark{}
\begin{Lem}\label{lema1} 
 For all $1\leq q<\infty$ we have 
 \begin{equation} \label{eqlem01}
\left(E\max_{1\leq \ell\leq N_{\varepsilon}+1}\Delta_{\ell}^{q}\right)^{1/q}\leq C\cdot \left(\varepsilon\cdot \ln\left(1/\varepsilon\right)\right)^{2}.
\end{equation}
\end{Lem}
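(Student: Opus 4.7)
The key distributional fact is that $\Delta_1, \Delta_2, \ldots$ are (the initial segment of) an i.i.d.\ sequence, each distributed as $\varepsilon^2 \tau_{1,1}$ by (\ref{asymptotic9}). The theta-series representation of the Kolmogorov distribution function in Lemma \ref{Verteilung}(1) gives, for large $x$, $K(1/\sqrt{x}) = \sqrt{2\pi x}\sum_{i\ge 1} e^{-(2i-1)^2\pi^2 x/8}$, and hence the exponential tail
$$\mathbb{P}\left(\tau_{1,1} > x\right) \leq c_1\, e^{-c_2 x}, \qquad x \geq 0,$$
for some constants $c_1,c_2>0$ (this is essentially the estimate (\ref{asymptotic10}) already used in the proof of Proposition \ref{LemN}). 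Rescaling gives $\mathbb{P}(\xi_{j,\varepsilon} > t) \leq c_1 e^{-c_2 t/\varepsilon^2}$ for all $t \geq 0$.

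My plan is to decouple the random upper index $N_\varepsilon + 1$ from a max of an i.i.d.\ sequence by truncating at a deterministic level. Fix a constant $A>0$, to be specified later, and put $K_\varepsilon = \lceil A \ln(1/\varepsilon)/\varepsilon^2 \rceil$. Then
$$E\max_{1\leq \ell \leq N_\varepsilon+1}\Delta_\ell^q \leq E\max_{1\leq \ell \leq K_\varepsilon}\xi_{\ell,\varepsilon}^q + E\Bigl[\max_{1\leq \ell \leq N_\varepsilon+1}\Delta_\ell^q \cdot \mathbf{1}_{\{N_\varepsilon+1 > K_\varepsilon\}}\Bigr].$$
For the first summand I would apply the union bound $\mathbb{P}(\max_{\ell\leq K_\varepsilon}\xi_{\ell,\varepsilon}>t) \leq c_1 K_\varepsilon e^{-c_2 t/\varepsilon^2}$, insert this into the layer-cake identity $E(\cdot)^q = \int_0^\infty q t^{q-1}\mathbb{P}(\cdot>t)\,dt$, and split the integral at $t^\ast = (\varepsilon^2/c_2)\ln K_\varepsilon$ (where the tail bound equals $1$). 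A routine computation then gives $E\max_{1\leq \ell \leq K_\varepsilon}\xi_{\ell,\varepsilon}^q \leq C(\varepsilon^2 \ln K_\varepsilon)^q$, and since $\ln K_\varepsilon \leq C'\ln(1/\varepsilon)$ this is $\leq C(\varepsilon \ln(1/\varepsilon))^{2q}$ for small $\varepsilon$.

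For the remainder I would use that $\Delta_\ell \leq S_{N_\varepsilon} < 1$ for $\ell \leq N_\varepsilon$, while $\Delta_{N_\varepsilon+1} = \xi_{N_\varepsilon+1,\varepsilon}$ is an independent copy of $\xi_{1,\varepsilon}$ by the independence property noted in Remark \ref{remark} (already exploited in the proof of Proposition \ref{LemN}(2)). Hence $\max_\ell \Delta_\ell^q \leq 1 + \xi_{N_\varepsilon+1,\varepsilon}^q$, and Cauchy--Schwarz bounds the remainder by
$$\mathbb{P}(N_\varepsilon \geq K_\varepsilon) + \bigl(E \xi_{1,\varepsilon}^{2q}\bigr)^{1/2} \cdot \mathbb{P}(N_\varepsilon \geq K_\varepsilon)^{1/2}.$$
The event $\{N_\varepsilon \geq K_\varepsilon\} = \{S_{K_\varepsilon} \leq 1\}$ is a large deviation below the mean $E S_{K_\varepsilon} = K_\varepsilon \varepsilon^2 E\tau_{1,1} \sim A\ln(1/\varepsilon)$, which tends to infinity. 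Since the exponential tail above gives a moment generating function for $\tau_{1,1}$ that is finite in a neighbourhood of zero, a standard Chernoff estimate yields $\mathbb{P}(N_\varepsilon \geq K_\varepsilon) \leq \varepsilon^{cA}$ for some $c > 0$. Taking $A$ large enough that $cA \geq 2q+2$ makes both pieces of the remainder $o((\varepsilon\ln(1/\varepsilon))^{2q})$; combined with the main-term estimate and the $q$-th root, this yields (\ref{eqlem01}). The only mildly non-routine step is extracting a usable moment generating function bound for $\tau_{1,1}$ from Lemma \ref{Verteilung}, but the theta-series representation makes this essentially immediate.
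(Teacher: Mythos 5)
Your argument is correct, and its computational core is the same as the paper's: exploit the i.i.d.\ structure of the $\xi_{j,\varepsilon}$ together with the exponential tail $\mathbb{P}\left(\xi_{1,1}>t\right)\leq\exp\left(-C\cdot t\right)$ from (\ref{asymptotic10}), feed a union bound into the layer-cake identity, and split the resulting integral at a threshold of order $\varepsilon^{2}\ln\left(1/\varepsilon\right)$ to collect a main term of size $\left(\varepsilon^{2}\ln\left(1/\varepsilon\right)\right)^{q}\leq\left(\varepsilon\cdot\ln\left(1/\varepsilon\right)\right)^{2q}$. Where you genuinely diverge is in how the random index $N_{\varepsilon}+1$ is replaced by a deterministic one. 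The paper asserts, citing Proposition \ref{LemN}, that $N_{\varepsilon}+1\leq\lfloor 1/\varepsilon^{2}\rfloor$ almost surely and then simply computes $E\max_{1\leq\ell\leq\lfloor 1/\varepsilon^{2}\rfloor}\xi_{\ell,\varepsilon}^{q}$; that assertion is not contained in Proposition \ref{LemN} and is in fact false, since $\mathbb{P}\left(\tau_{1,1}<1/2\right)>0$ gives $\mathbb{P}\bigl(S_{\lfloor 1/\varepsilon^{2}\rfloor}<1\bigr)>0$, i.e.\ $N_{\varepsilon}\geq\lfloor 1/\varepsilon^{2}\rfloor$ with positive probability. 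You instead truncate at $K_{\varepsilon}\asymp\ln\left(1/\varepsilon\right)/\varepsilon^{2}$ and control the overflow event $\{N_{\varepsilon}+1>K_{\varepsilon}\}$ via the bound $\max_{\ell\leq N_{\varepsilon}+1}\Delta_{\ell}^{q}\leq 1+\xi_{N_{\varepsilon}+1,\varepsilon}^{q}$, the independence of $\xi_{n+1,\varepsilon}$ from $\{N_{\varepsilon}=n\}$ already used in Proposition \ref{LemN}(2), and a Chernoff lower-tail estimate for $S_{K_{\varepsilon}}$. This costs an extra half page but is precisely the step that makes the lemma's proof rigorous, so your additional work is not redundant. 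One minor remark: for the lower-tail bound $\mathbb{P}\left(S_{K_{\varepsilon}}\leq 1\right)\leq e^{\lambda}\cdot\bigl(E\,e^{-\lambda\xi_{1,\varepsilon}}\bigr)^{K_{\varepsilon}}$ you only need the Laplace transform $E\,e^{-\lambda\xi_{1,\varepsilon}}$ at positive $\lambda$, which is finite for any nonnegative random variable; the appeal to a two-sided moment generating function extracted from the theta series is more than is required, and with $\lambda=1/\varepsilon^{2}$ one even gets a bound decaying faster than any power of $\varepsilon$, so the choice of $A$ is not delicate.
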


\begin{proof}
At first, from Lemma \ref{LemN} we have for every $\varepsilon >0$
         $$ N_{\varepsilon}+1 \leq \lfloor 1/\varepsilon^{2}\rfloor \qquad \mathrm{a.s.}$$We have
\begin{eqnarray*}
E\left(\max_{1\leq \ell\leq \lfloor 1/\varepsilon^{2}\rfloor}\Delta_{\ell}^{q}\right) &=& \int_{0}^{\infty}\mathbb{P}\left(\max_{1\leq \ell\leq \lfloor 1/\varepsilon^{2}\rfloor }\Delta_{\ell}>t^{1/q}\right)\,dt\\
                                                                                        &=& \int_{0}^{\infty}\left(1-\left(\mathbb{P}\left(\xi_{1,\varepsilon}\leq t^{1/q}\right)\right)^{\lfloor 1/\varepsilon^{2}\rfloor}\right)\,dt\\
                                                                                        &=&\int_{0}^{\infty}\left(1-\left(\mathbb{P}\left(\xi_{1,1}\leq t^{1/q}/\varepsilon^{2}\right)\right)^{\lfloor 1/\varepsilon^{2}\rfloor}\right)\,dt. 
\end{eqnarray*}
Note, that from \cite{CreutzigGronbachRitter:07} we have for all $t\geq 1$ 
\begin{equation}
\mathbb{P}\left(\xi_{1,1}>t\right)\leq\exp\left(-C\cdot t\right) \label{asymptotic10} 
\end{equation}
with some constant $C>0$. \\
Hence
\begin{eqnarray*}  
E\left(\max_{1\leq \ell\leq N_{\varepsilon}+1}\Delta_{\ell}^{q}\right) &\leq& E\left(\max_{1\leq \ell\leq\lfloor 1/\varepsilon^{2}\rfloor} \Delta_{\ell}^{q}\right)\\
                                                                  &\leq& \left(\varepsilon\cdot \ln\left(1/\varepsilon\right)\right)^{2q} + \varepsilon^{2q}\cdot \lfloor 1/\varepsilon^{2}\rfloor \cdot q\cdot \int_{\left(\ln\left(1/\varepsilon\right)\right)^{2}}^{\infty}t^{q-1}\exp\left(-C\cdot t\right)\,dt. \\
                                                                  &\leq&C\cdot\left( \left(\varepsilon\cdot \ln\left(1/\varepsilon\right)\right)^{2q} + \left(\varepsilon\cdot \ln\left(1/\varepsilon\right)\right)^{2q} \cdot\lfloor 1/\varepsilon^{2}\rfloor \cdot \exp\left(-C\cdot\left(\ln\left(1/\varepsilon\right)\right)^{2} \right)\right)\\
                                                                  &\leq & C\cdot \left(\varepsilon\cdot \ln\left(1/\varepsilon\right)\right)^{2q}.
\end{eqnarray*}
This finishes the proof.
\end{proof}
 \begin{Lem} 
\label{lema1-1} For all $1\leq q<\infty$ we have \begin{equation}
E\sup_{t\in\left[0,1\right]}\left|X^{M}_{\varepsilon}\left(t\right)\right|^{q}\leq C\label{lem51}\end{equation}
and \begin{equation}
E\sup_{t\in\left[\tau_{\ell-1},\tau_{\ell}\right]}\left|X^{M}_{\varepsilon}\left(t\right)-X^{M}_{\varepsilon}\left(\tau_{\ell-1}\right)\right|^{2q}\leq C\cdot E\left(\Delta_{\ell}^{q}\right).\label{lem52}\end{equation}
\end{Lem}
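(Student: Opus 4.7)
The plan is to prove (\ref{lem52}) first, taking (\ref{lem51}) as a uniform moment estimate at the grid points, and then to establish (\ref{lem51}) by a separate discrete induction followed by a one-step lift. The key structural ingredient is the strong Markov property at the stopping time $\tau_{\ell-1}$: the shifted process $s\mapsto W(s+\tau_{\ell-1})-W(\tau_{\ell-1})$ is a Brownian motion independent of $\mc{F}_{\tau_{\ell-1}}$, and by construction of the sequence $(\tau_{j,\varepsilon})_{j\ge 0}$ in Section 2, $\Delta_\ell$ is a stopping time for the natural filtration of this shifted Brownian motion. In particular $(\Delta_\ell,\,\sup_{0\le s\le\Delta_\ell}\absv{W(s+\tau_{\ell-1})-W(\tau_{\ell-1})})$ is independent of the $\mc{F}_{\tau_{\ell-1}}$-measurable value $X^M_\varepsilon(\tau_{\ell-1})$.

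For (\ref{lem52}), set $S_\ell=\sup_{0\le s\le\Delta_\ell}\absv{W(s+\tau_{\ell-1})-W(\tau_{\ell-1})}$. Using (\ref{milst}), the linear growth (\ref{growth}) and the boundedness of $\sigma^{(0,1)}$, one bounds pathwise
\begin{equation*}
\sup_{t\in[\tau_{\ell-1},\tau_\ell]}\absv{X^M_\varepsilon(t)-X^M_\varepsilon(\tau_{\ell-1})}^{2q}\le C\cdot\bigl(1+\absv{X^M_\varepsilon(\tau_{\ell-1})}\bigr)^{2q}\cdot\bigl(\Delta_\ell^{2q}+S_\ell^{2q}+S_\ell^{4q}\bigr).
\end{equation*}
Taking expectations, the independence just mentioned factors the right-hand side: the $\mc{F}_{\tau_{\ell-1}}$-factor has bounded $L^{2q}$-norm by (\ref{lem51}), while the Burkholder--Davis--Gundy inequality applied to the Brownian motion stopped at $\Delta_\ell$ yields $E[S_\ell^p]\le C\cdot E[\Delta_\ell^{p/2}]$. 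Since $\Delta_\ell\le \varepsilon^2\tau_{1,1}$ in distribution by (\ref{asymptotic9}) and $\tau_{1,1}$ has all moments, $E[\Delta_\ell^{2q}]\le C\cdot E[\Delta_\ell^q]$ uniformly in $\ell$ and $\varepsilon$; the same bound then controls the $S_\ell^{4q}$ term. This gives (\ref{lem52}).

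For (\ref{lem51}), I would first establish by induction on $\ell$ that $\max_{0\le\ell\le N_\varepsilon+1}E[\absv{X^M_\varepsilon(\tau_\ell)}^q]\le C$, uniformly in $\varepsilon$. Writing $X^M_\varepsilon(\tau_\ell)$ as the telescoped sum coming from (\ref{milstein}), the drift contribution is controlled through independence and linear growth by $C\cdot E[(1+\absv{X^M_\varepsilon(\tau_{\ell-1})})^q]\cdot E[\Delta_\ell]$, while the diffusion term and the Milstein correction $\Delta W_\ell^2-\Delta_\ell$ form martingale differences with respect to $(\mc{F}_{\tau_\ell})$ whose $L^q$-norms are bounded via discrete Burkholder--Davis--Gundy together with the one-step BDG estimate for the stopped Brownian increment. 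Combining these with $\sum_\ell E[\Delta_\ell]\le 1+E[\xi_{N_\varepsilon+1,\varepsilon}]\le 2$ and a discrete Gronwall argument produces the claimed grid-point bound. The passage from the grid to $\sup_{t\in[0,1]}$ is then achieved by repeating the one-step analysis used for (\ref{lem52}), but invoking only the grid-point moment estimate just established rather than (\ref{lem51}) itself; this breaks the apparent circularity.

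The main obstacle is to recover the sharp exponent $E[\Delta_\ell^q]$ on the right-hand side of (\ref{lem52}), rather than the suboptimal $E[\Delta_\ell^{q/2}]$ that a naive estimate would give. The difficulty is that the Brownian supremum $S_\ell$ is taken over the \emph{random} interval $[0,\Delta_\ell]$, so one cannot simply condition on $\Delta_\ell$ and use Gaussian moments of $\Delta W_\ell$; the correct bound emerges only by keeping $S_\ell$ inside BDG as a stopped-martingale supremum, which in turn relies crucially on $\Delta_\ell$ being a stopping time for the shifted Brownian filtration---the defining property of the construction (\ref{stop1}).
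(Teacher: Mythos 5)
Your proposal is essentially correct, and for the increment bound (\ref{lem52}) it follows the same line as the paper: a pathwise estimate from (\ref{milst}) via linear growth and boundedness of $\sigma^{(0,1)}$, followed by the Burkholder--Davis--Gundy inequality for the Brownian motion stopped at $\Delta_{\ell}$ (which is legitimate precisely because $\Delta_{\ell}$ is a stopping time for the shifted filtration, as you stress), and the distributional identity $\Delta_{\ell}\stackrel{d}{=}\varepsilon^{2}\tau_{1,1}$ to absorb the higher powers $E(\Delta_{\ell}^{2q})$ into $C\cdot E(\Delta_{\ell}^{q})$. Your factorization through the $\mathcal{F}_{\tau_{\ell-1}}$-measurable prefactor $(1+|X^{M}_{\varepsilon}(\tau_{\ell-1})|)^{2q}$ and strong-Markov independence is in fact slightly cleaner than the paper's, which pulls out the global supremum $1+\sup_{t\in[0,1]}|X^{M}_{\varepsilon}(t)|^{2q}$ and must then separate the two factors by H\"older.

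Where you genuinely diverge is in the proof of the uniform moment bound (\ref{lem51}). You propose a discrete induction over the random grid $\tau_{0}<\cdots<\tau_{N_{\varepsilon}+1}$, treating the diffusion and Milstein-correction terms as martingale differences, applying a discrete Burkholder inequality and a discrete Gronwall argument, and then lifting from the grid to $\sup_{t\in[0,1]}$ by the one-step analysis. The paper instead observes that the interpolated process (\ref{milst}) is itself a continuous It\^o process, writes $X^{M}_{\varepsilon}(t)$ as $X(0)$ plus a Lebesgue integral and a single stochastic integral over $[0,t]$, and applies the continuous-time Burkholder and H\"older inequalities to $Y(t)=\sup_{s\leq t}|X^{M}_{\varepsilon}(s)|^{q}$ followed by the ordinary Gronwall lemma. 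The paper's route buys a substantial simplification: there is no random number of summands to control, no need for a discrete square-function estimate for the correction term $(\Delta W_{\ell})^{2}-\Delta_{\ell}$, and no separate grid-to-continuum lift. Your discrete route is viable (the paper itself uses exactly this kind of stopped-sum-plus-discrete-Gronwall machinery in Lemma \ref{lema1-2}), but it leaves more to be checked than your sketch acknowledges: the martingale property of the correction increments requires an optional-stopping/Wald argument since $\Delta_{\ell}$ is unbounded, and the discrete Gronwall step needs an a priori control on the number of steps (the paper invokes $N_{\varepsilon}+1\leq\lfloor 1/\varepsilon^{2}\rfloor$ for this purpose). None of this is a gap in principle, but the continuous formulation disposes of it in one stroke.
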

\begin{proof}
We may assume that $q\geq2$. Using the linear growth condition (\ref{growth})
it is easy to show by induction on $\ell$ that \begin{equation}
E\left|X^{M}_{\varepsilon}\left(\tau_{\ell}\right)\right|^{q}<\infty\label{eqp1}\end{equation}
for all $\ell\in\left\{1,\ldots,N_{\varepsilon}+1\right\} $.  Put $U_{\ell}=\left(\tau_{\ell},X^{M}_{\varepsilon}\left(\tau_{\ell}\right)\right)$. Then, we have 
\begin{eqnarray*}
\lefteqn{X^{M}_{\varepsilon}\left(t\right) = X\left(0\right)+\int_{0}^{t}\sum_{\ell=1}^{N_{\varepsilon}+1}a\left(U_{\ell-1}\right)\cdot\mathbf{1}_{\left]\tau_{\ell-1},\tau_{\ell}\right]}\left(s\right)\,ds\;}\\
 &  & +\int_{0}^{t}\sum_{\ell=1}^{N_{\varepsilon}+1}\sigma\left(U_{\ell-1}\right)\cdot\left(1+\sigma^{\left(0,1\right)}\left(U_{\ell-1}\right)\cdot\left(W\left(s\right)-W\left(\tau_{\ell-1}\right)\right)\right)\cdot\mathbf{1}_{\left]\tau_{\ell-1},\tau_{\ell}\right]}\left(s\right)\, dW\left(s\right).\end{eqnarray*}
Define $Y\left(t\right)=\sup_{s\leq t}\bigl|\overline{X}_{\varepsilon}\left(s\right)\bigr|^{q},\; t\in\left[0,1\right]$. Then, we have 
\begin{eqnarray*}
\lefteqn {E\left(Y\left(t\right)\right) \leq }\\
& & 3^{q-1}\cdot\biggl[E\left|X\left(0\right)\right|^{q}+E\sup_{s\leq t}\left|\int_{0}^{s}\sum_{\ell=1}^{N_{\varepsilon}+1}a\left(U_{\ell-1}\right)\cdot\mathbf{1}_{\left]\tau_{\ell-1},\tau_{\ell}\right]}\left(u\right)\, ds\right|^{q}\;+\\
 &  & E\sup_{s\leq t}\left|\int_{0}^{s}\sum_{\ell=1}^{N_{\varepsilon}+1}\left(\sigma\left(U_{\ell-1}\right)+\left(\sigma\cdot\sigma^{\left(0,1\right)}\right)\left(U_{\ell-1}\right)\cdot\left(W\left(u\right)-W\left(\tau_{\ell-1}\right)\right)\right)\cdot\mathbf{1}_{\left]\tau_{\ell-1},\tau_{\ell}\right]}\left(u\right)\, dW\left(u\right)\right|^{q}\biggr].\end{eqnarray*}
By the Burkholder's and H\"older's inequalities, see \cite[Theorem 3.28]{KaratzasShreve:88},
we get\begin{eqnarray*}
\lefteqn {E\left(Y\left(t\right)\right) \leq } \\ 
&  & c_{q}\cdot\biggl[E\left|X\left(0\right)\right|^{q}+E\left(\int_{0}^{t}\left|\sum_{\ell=1}^{N_{\varepsilon}+1}a\left(U_{\ell-1}\right)\cdot\mathbf{1}_{\left]\tau_{\ell-1},\tau_{\ell}\right]}\left(s\right)\right|\, ds\right)^{q}\;+\nonumber \\
 &  &  E\left(\int_{0}^{t}\left|\sum_{\ell=1}^{N_{\varepsilon}+1}\left(\sigma\left(U_{\ell-1}\right)+\left(\sigma\cdot\sigma^{\left(0,1\right)}\right)\left(U_{\ell-1}\right)\cdot\left(W\left(s\right)-W\left(\tau_{\ell-1}\right)\right)\right)\cdot\mathbf{1}_{\left]\tau_{\ell-1},\tau_{\ell}\right]}\left(s\right)\right|^{2}\, ds\right)^{q/2}\biggr] \\
 & \leq & c_{q}\cdot\biggl[E\left|X\left(0\right)\right|^{q}+t^{q-1}\cdot E\int_{0}^{t}\left|\sum_{\ell=1}^{N_{\varepsilon}+1}a\left(U_{\ell-1}\right)\cdot\mathbf{1}_{\left]\tau_{\ell-1},\tau_{\ell}\right]}\left(s\right)\right|^{q}\, ds\;+\label{eqHoelder}\\
 &  & t^{q/2-1}\cdot E\int_{0}^{t}\left|\sum_{\ell=1}^{N_{\varepsilon}+1}\left(\sigma\left(U_{\ell-1}\right)+\left(\sigma\cdot\sigma^{\left(0,1\right)}\right)\left(U_{\ell-1}\right)\cdot\left(W\left(s\right)-W\left(\tau_{\ell-1}\right)\right)\right)\cdot\mathbf{1}_{\left]\tau_{\ell-1},\tau_{\ell}\right]}\left(s\right)\right|^{q}\, ds\biggr], \end{eqnarray*}
where $c_{q}$ denotes some positive constants depending only on $q$.\\
At first, from the linear growth condition (\ref{growth}) and (\ref{eqp1})
it follows that \begin{equation}
E\left(Y\left(t\right)\right)<\infty\label{eqendlich}\end{equation}
for all $t\in\left[0,1\right]$. We use (\ref{growth}) to obtain$$
E\left(\left|a\left(U_{\ell-1}\right)\right|^{q}\cdot \mathbf{1}_{\left]\tau_{\ell-1},\tau_{\ell}\right]}\left(s\right)\right)\leq C\cdot\left(1+ E\sup_{u\leq s}\left|X^{M}_{\varepsilon}\left(u\right)\right|^{q}\right)$$ and \begin{equation} E\left(\left|\sigma\left(U_{\ell-1}\right)\right|^{q}\cdot \mathbf{1}_{\left]\tau_{\ell-1},\tau_{\ell}\right]}\left(s\right)\right)\leq C\cdot\left(1+ E\sup_{u\leq s}\left|X^{M}_{\varepsilon}\left(u\right)\right|^{q}\right).\label{abs2-1}
\end{equation}
Thus  
\begin{eqnarray}
\lefteqn{ E\left( \left|\sigma\left(U_{\ell-1}\right)+\left(\sigma\cdot\sigma^{\left(0,1\right)}\right)\left(U_{\ell-1}\right)\cdot\left(W\left(s\right)-W\left(\tau_{\ell-1}\right)\right)\right|^{q}\cdot\mathbf{1}_{\left]\tau_{\ell-1},\tau_{\ell}\right]}\left(s\right)\right) \leq }\nonumber\\
                         & & 2^{q-1}\Bigl[E\left|\sigma\left(U_{\ell-1}\right)\right|^{q}+ C\cdot E\left|\sigma\left(U_{\ell-1}\right)\right|^{q}\cdot E\sup_{\tau_{\ell-1}\leq s\leq \tau_{\ell}}\left|W\left(s\right)-W\left(\tau_{\ell-1}\right)\right|^{q}\Bigr] \nonumber\\
                         & \leq & C\cdot\left(1+E\sup_{u\leq s}\left|X^{M}_{\varepsilon}\left(u\right)\right|^{q}\right).\label{abs1-1}\end{eqnarray} 
Hence, from (\ref{abs2-1}) and (\ref{abs1-1}) we get \[
E\left(Y\left(t\right)\right)\leq C\cdot\left[E\left|X\left(0\right)\right|^{q}+t^{q/2-1}\left(t+\int_{0}^{t}E\left(Y\left(s\right)\right)\, ds\right)\right],\]
and result (\ref{lem51}) follows from (\ref{eqendlich}) by Gronwall's
lemma.\\ Furthermore, we have
\begin{eqnarray*}
\lefteqn{\sup_{t\in\left[\tau_{\ell-1},\tau_{\ell}\right]}\left|X^{M}_{\varepsilon}\left(t\right)-X^{M}_{\varepsilon}\left(\tau_{\ell-1}\right)\right|^{2q}\;  \leq\;  C\cdot\Biggl[\bigl(1+\sup_{t\in\left[0,1\right]}\left|X^{M}_{\varepsilon}\left(t\right)\right|^{2q}\bigr)\cdot \biggl(|\tau_{\ell}-\tau_{\ell-1}|^{2q} \;+}\\
 &  & \sup_{\tau_{\ell-1}\leq t\leq \tau_{\ell}}\left|W\left(t\right)-W\left(\tau_{\ell-1}\right)\right|^{2q}+ \sup_{\tau_{\ell-1}\leq t\leq \tau_{\ell}}\left|W\left(t\right)-W\left(\tau_{\ell-1}\right)\right|^{4q}\biggr) \Biggr].
\end{eqnarray*} 
From this we obtain the second estimate (\ref{lem52}) by using (\ref{lem51})
and Burkholder's inequality.\end{proof} 
 
Let $$
  F\left(s\right) = \sum_{\ell=1}^{N_{\varepsilon}+1}\left(a^{\left(0,1\right)}\cdot\sigma\right)\left(U_{\ell-1}\right)\cdot \left(W\left(s\right)-W\left(\tau_{\ell-1}\right)\right)\cdot\mathbf{1}_{\left]\tau_{\ell-1},\tau_{\ell}\right]}\left(s\right). $$

\begin{Lem}\label{lema1-2} For all $1\leq q<\infty$  we have 
\begin{equation}
E\sup_{s\leq 1}\left|\int_{0}^{s}F\left(u\right)\, du\right|^{q}\leq C\cdot\left(\left(E \max_{1\leq \ell\leq N_{\varepsilon}+1}\Delta_{\ell}^{2q}\right)^{1/2} + \varepsilon^{2q}\right). \label{eqlema1-2}
\end{equation}
\end{Lem}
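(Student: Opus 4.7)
The plan is to apply the Itô integration-by-parts identity
\begin{equation*}
\int_{\tau_{\ell-1}}^{t}\bigl(W(u)-W(\tau_{\ell-1})\bigr)\,du = (t-\tau_{\ell-1})\bigl(W(t)-W(\tau_{\ell-1})\bigr) - \int_{\tau_{\ell-1}}^{t}(u-\tau_{\ell-1})\,dW(u)
\end{equation*}
on each free-knot subinterval. Writing $G_{\ell-1}=(a^{(0,1)}\cdot\sigma)(U_{\ell-1})$ and, for $s\in\;]\tau_{L-1},\tau_L]$, summing over $\ell=1,\dots,L-1$ and adding the piece on $]\tau_{L-1},s]$, this rewrites $\int_0^s F(u)\,du=A(s)-M(s)$ with
\begin{align*}
A(s) &= \sum_{\ell=1}^{L-1}G_{\ell-1}\Delta_{\ell}\bigl(W(\tau_{\ell})-W(\tau_{\ell-1})\bigr) + G_{L-1}(s-\tau_{L-1})\bigl(W(s)-W(\tau_{L-1})\bigr),\\
M(s) &= \int_0^s\sum_{\ell=1}^{N_{\varepsilon}+1}G_{\ell-1}(u-\tau_{\ell-1})\mathbf{1}_{]\tau_{\ell-1},\tau_{\ell}]}(u)\,dW(u).
\end{align*}
The process $M$ is a genuine continuous Itô martingale (its integrand is $\mathcal{F}_u$-adapted because $\tau_{\ell-1}$ is a stopping time and $G_{\ell-1}$ is $\mathcal{F}_{\tau_{\ell-1}}$-measurable), while $Y_n:=\sum_{\ell\leq n}G_{\ell-1}\Delta_{\ell}(W(\tau_{\ell})-W(\tau_{\ell-1}))$ is a discrete $(\mathcal{F}_{\tau_n})$-martingale: by Remark \ref{remark} the pair $(\xi_{n,\varepsilon},\Lambda_n)$ is independent of $\mathcal{F}_{\tau_{n-1}}$ with $\Lambda_n\sim N(0,1)$ independent of $\xi_{n,\varepsilon}$, so $E[\Delta_n^{3/2}\Lambda_n\mid\mathcal{F}_{\tau_{n-1}}]=0$.

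For $M$ I would invoke the continuous Burkholder--Davis--Gundy inequality, then bound $\sum_\ell G_{\ell-1}^2\Delta_\ell^3\leq(\max_\ell\Delta_\ell^2)\sum_\ell G_{\ell-1}^2\Delta_\ell\leq C(\max_\ell\Delta_\ell^2)(1+\sup_t|X^M_{\varepsilon}(t)|^2)$ (boundedness of $a^{(0,1)}$ under (A), linear growth of $\sigma$, and $\sum_\ell\Delta_\ell=1$). Cauchy--Schwarz and Lemma \ref{lema1-1} then give $E\sup_s|M(s)|^q\leq C(E\max_\ell\Delta_\ell^{2q})^{1/2}$. For $Y_n$, the discrete BDG inequality gives $E\sup_n|Y_n|^q\leq C\,E\bigl(\sum_\ell G_{\ell-1}^2\Delta_\ell^2(W(\tau_\ell)-W(\tau_{\ell-1}))^2\bigr)^{q/2}$; factoring out $\max_\ell\Delta_\ell^2$ reduces matters to controlling $\sum_\ell G_{\ell-1}^2(W(\tau_\ell)-W(\tau_{\ell-1}))^2$, which is the optional quadratic variation of the Itô integral $\int_0^1\sum_\ell G_{\ell-1}\mathbf{1}_{]\tau_{\ell-1},\tau_\ell]}\,dW$ and so, by Burkholder's equivalence and continuous BDG, has $q$-th moment bounded by $C\,E(\sum_\ell G_{\ell-1}^2\Delta_\ell)^q\leq C$ via Lemma \ref{lema1-1}. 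One more Cauchy--Schwarz delivers the same bound $C(E\max_\ell\Delta_\ell^{2q})^{1/2}$.

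The main obstacle is the boundary term $G_{L-1}(s-\tau_{L-1})(W(s)-W(\tau_{L-1}))$, whose index $L$ depends on $s$; its supremum in $s$ is bounded pathwise by $\max_\ell|G_{\ell-1}|\Delta_\ell\sup_{u\in[\tau_{\ell-1},\tau_\ell]}|W(u)-W(\tau_{\ell-1})|$, and (\ref{interpo3}) combined with the linearity of $\widetilde{W}_{\tau_{\ell-1},\tau_\ell}$ gives $\sup_u|W(u)-W(\tau_{\ell-1})|\leq\varepsilon+|W(\tau_\ell)-W(\tau_{\ell-1})|=\varepsilon+\sqrt{\Delta_\ell}\,|\Lambda_\ell|$. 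The $\varepsilon$-part contributes at most $C\varepsilon^q(E\max_\ell\Delta_\ell^{2q})^{1/2}$ (Cauchy--Schwarz plus Lemma \ref{lema1-1}), which is absorbed into the $\varepsilon^{2q}$ safety term thanks to Lemma \ref{lema1}. The $\sqrt{\Delta}\Lambda$-part, using independence of $(\xi_{j,\varepsilon})$ and $(\Lambda_j)$ from Remark \ref{remark} and the Gaussian maximum bound $E\max_{\ell\leq\lfloor 1/\varepsilon^2\rfloor}|\Lambda_\ell|^{2q}\leq C(\ln(1/\varepsilon))^q$, is of strictly lower order than $(E\max_\ell\Delta_\ell^{2q})^{1/2}$. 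Assembling the $A$- and $M$-estimates by the triangle inequality in $L^q$ then yields (\ref{eqlema1-2}).
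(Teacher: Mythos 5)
Your proof is correct, but it takes a genuinely different route from the paper's. The paper splits $\int_0^sF(u)\,du$ into the sum over completed blocks, $A_1(s)=\sum_{\ell\le n_s}(a^{(0,1)}\cdot\sigma)(U_{\ell-1})\cdot\Theta_\ell$ with $\Theta_\ell=\int_{\tau_{\ell-1}}^{\tau_\ell}(W(u)-W(\tau_{\ell-1}))\,du$, plus an overshoot term $A_2$ (your boundary term, which you handle essentially the same way, if anything more explicitly via (\ref{interpo3})). It then observes that $A_1$ is a right-continuous martingale, applies Doob's inequality, and estimates the terminal $2p$-th moment by a binomial expansion conditional on $\mathcal{F}_{\tau_n}$, using the moment identity $E(\Theta_\ell)^r=k_r\,E(\Delta_\ell^{3r/2})$ with $k_r=0$ for odd $r$ and $E(\Delta_\ell^{3r/2})\asymp\varepsilon^{3r}$, closed off with a discrete Gronwall lemma; this yields the sharper bound $C\cdot\varepsilon^{2q}$ for the block sum. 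You instead integrate by parts, $\Theta_\ell=\Delta_\ell(W(\tau_\ell)-W(\tau_{\ell-1}))-\int_{\tau_{\ell-1}}^{\tau_\ell}(u-\tau_{\ell-1})\,dW(u)$, and control the resulting continuous and discrete martingales by two applications of Burkholder--Davis--Gundy, factoring $\max_\ell\Delta_\ell^2$ out of each bracket and finishing with Cauchy--Schwarz and Lemma \ref{lema1-1}. This avoids the $\Theta$-moment identity (whose justification via Remark \ref{remark} and Brownian scaling the paper only sketches) and the Gronwall recursion, at the cost of the slightly weaker bound $C\cdot(E\max_\ell\Delta_\ell^{2q})^{1/2}$ for these pieces --- which is exactly the first term on the right-hand side of (\ref{eqlema1-2}), so the lemma still follows. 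Two cosmetic points: $\sum_{\ell=1}^{N_\varepsilon+1}\Delta_\ell$ is not $1$ but only $\le 1+\max_\ell\Delta_\ell$, since the last increment is $\xi_{N_\varepsilon+1,\varepsilon}$ and overshoots $1$; and the martingale property of your discrete sum stopped at the random index requires noting that $\{n\le N_\varepsilon+1\}=\{S_{n-1}<1\}\in\mathcal{F}_{\tau_{n-1}}$. Both are easily repaired and change nothing.
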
  
\begin{proof}
We may assume that $q=2p$ with $p\in \mathbb{N}$. At first, we define \[
n_{t}=\max\left\{ \ell=1,\ldots,N_{\varepsilon}+1\;:\; \tau_{\ell-1}\leq t\right\} ,\]
and we consider the right continuous filtration $\left(\mathcal{F}_{t}\right)_{t\in\left[0,1\right]}$ generated by $W$. 
Then, we have\[
E\sup_{s\leq 1}\left|\int_{0}^{s}F\left(u\right)\, du\right|^{q}\leq2^{q-1}\left[E\sup_{s\leq 1}\left|A_{1}\left(s\right)\right|^{q}+E\sup_{s\leq 1}\left|A_{2}\left(s\right)\right|^{q}\right],\]
where \begin{eqnarray*}
A_{1}\left(s\right) & = & \int_{0}^{\tau_{n_{s}}}\sum_{\ell=1}^{N_{\varepsilon}+1}\left(a^{\left(0,1\right)}\cdot\sigma\right)\left(U_{\ell-1}\right)\cdot\left(W\left(u\right)-W\left(\tau_{\ell-1}\right)\right)\cdot\mathbf{1}_{\left]\tau_{\ell-1},\tau_{\ell}\right]}\left(u\right)\, du\\
A_{2}\left(s\right) & = & \int_{s}^{\tau_{n_{s}}}\left(a^{\left(0,1\right)}\right)\cdot\sigma\left(U_{n_{s}-1}\right)\cdot\left(W\left(u\right)-W\left(\tau_{n_{s}-1}\right)\right)\, du.\\
\end{eqnarray*}
 We use the linear growth condition (\ref{growth}) and (\ref{lem51}) as well as H\"older's inequality 
to obtain\begin{eqnarray}
E\sup_{s\leq 1}\left|A_{2}\left(s\right)\right|^{q} & \leq & C\cdot E\sup_{s\leq 1}\left|\sigma\left(U_{n_{s}-1}\right)\cdot\int_{s}^{\tau_{n_{s}}}\left(W\left(u\right)-W\left(\tau_{n_{s}-1}\right)\right)\, du\right|^{q}\nonumber\\
 & \leq & C\cdot E\sup_{s\leq 1}\left(\left(\tau_{n_{s}}-s\right)^{q}\cdot \left|\sigma\left(U_{n_{s}-1}\right)\right|^{q}\cdot \sup_{s \leq u\leq \tau_{n_{s}}}\left|W\left(u\right)-W\left(\tau_{n_{s}-1}\right)\right|^{q}\right)\nonumber\\
 & \leq & C\cdot E\max_{1\leq\ell\leq N_{\varepsilon}+1}\left(\Delta_{\ell}^{q}\cdot \left(1+\left|X^{M}_{\varepsilon}\left(\tau_{\ell-1}\right)\right|^{q}\right) \cdot \sup_{\tau_{\ell-1}\leq u\leq \tau_{\ell}}\left|W\left(u\right)-W\left(\tau_{\ell-1}\right)\right|^{q}\right)\nonumber\\
 & \leq & C\cdot  \left(E\max_{1\leq\ell\leq N_{\varepsilon}+1}\Delta_{\ell}^{2q}\right)^{1/2}.\label{eqA1}\end{eqnarray}
We put\[
\Theta_{\ell}=\int_{\tau_{\ell-1}}^{\tau_{\ell}}\left(W\left(u\right)-W\left(\tau_{\ell-1}\right)\right)\, du.\]Not, that from Remark \ref{remark} we have for all $p\in \mathbb{N}$ \[
E\left(\Theta_{\ell}\right)^{p}=k_{p}\cdot E\left(\Delta_{\ell}^{3p/2}\right),\]
where$$ 
k_{p}\begin{cases}
= 0 & : \mathrm{if} \;p\;\mathrm{odd}\\
> 0 & :\mathrm{else}.\end{cases}$$
 Using this fact it is not difficult to show that $\bigl(A_{1}\left(s\right)\bigr)_{s\in\left[0,1\right]}$
is a right-continuous martingale with respect to the filtration $\left(\mathcal{F}_{\tau_{n_{s}}}\right)_{s\in\left[0,1\right]}$. So, by Doob's maximal inequality we have
\begin{eqnarray*}
E\sup_{s\leq 1}\left|A_{1}\left(s\right)\right|^{2p} &\leq & c_{p}\cdot E\left(A_{1}\left(1\right)\right)^{2p}\\
                                                     &=& c_{p}\cdot E\left(\sum_{\ell =1}^{N_{\varepsilon}+1} \left(a^{\left(0,1\right)}\cdot\sigma\right)\left(U_{\ell-1}\right)\cdot \Theta_{\ell}\right)^{2p}. 
\end{eqnarray*}
For $n\in \mathbb{N}$ we put $$ S_{n} =\sum_{\ell =1}^{n} \left(a^{\left(0,1\right)}\cdot\sigma\right)\left(U_{\ell-1}\right)\cdot \Theta_{\ell} \qquad \mathrm{and}\qquad S_{0}=0.$$
For every $\varepsilon >0$ and  $n\in\{0, \dots, \lfloor 1/\varepsilon^{2}\rfloor \}$ let $ T_{n}= \left(N_{\varepsilon}+1\right) \land n$. Then, we have 
$$ S_{T_{n+1}} =S_{T_{n}} +\left(S_{n+1}-S_{n}\right)\cdot \mathbf{1}_{\{N_{\varepsilon}+1>n\}} = S_{T_{n}} +\left(a^{\left(0,1\right)}\cdot\sigma\right)\left(U_{n}\right)\cdot \Theta_{n+1}\cdot \mathbf{1}_{\{N_{\varepsilon}+1>n\}}.$$
Hence,
\begin{eqnarray*}
\lefteqn{ E\left(|S_{T_{n+1}}|^{2p}\big | \mathcal{F}_{\tau_{n}}\right)}\\
  & = & E\left( \left(S_{T_{n}} + \left(a^{\left(0,1\right)}\cdot\sigma\right)\left(U_{n}\right)\cdot \Theta_{n+1}\cdot \mathbf{1}_{\{N_{\varepsilon}+1>n\}}\right)^{2p}\Big | \mathcal{F}_{\tau_{n}}\right) \\
 &= & \sum_{r=0}^{2p}{2p \choose r}\cdot E\left( \left(S_{T_{n}}\right)^{2p-r}\cdot\left(\left(a^{\left(0,1\right)}\cdot\sigma\right)\left(U_{n}\right)\cdot \Theta_{n+1}\cdot \mathbf{1}_{\{N_{\varepsilon}+1>n\}}\right)^{r}\Big | \mathcal{F}_{\tau_{n}}\right)\\
 &=&  \sum_{r=0}^{2p}{2p \choose r} \left(S_{T_{n}}\right)^{2p-r}\cdot\left(\left(a^{\left(0,1\right)}\cdot\sigma\right)\left(U_{n}\right)\right)^{r} \cdot \mathbf{1}_{\{N_{\varepsilon}+1>n\}}\cdot E\left(\Theta_{n+1}\right)^{r} \\
 &\leq & \left(S_{T_{n}}\right)^{2p} + C\cdot \sum_{r=1}^{p}{2p \choose 2r}\left(S_{T_{n}}\right)^{2p-2r}\cdot\left(\left(a^{\left(0,1\right)}\cdot\sigma\right)\left(U_{n}\right)\right)^{2r}\cdot \varepsilon^{6r}\\
  &\leq & \left(S_{T_{n}}\right)^{2p} + C\cdot \varepsilon^{2}\cdot \sum_{r=1}^{p}{2p \choose 2r}\left(S_{T_{n}}\right)^{2p-2r}\cdot\left(\varepsilon^{2}\cdot\left(1+\left|X^{M}_{\varepsilon}\left(\tau_{n}\right)\right|\right)\right)^{2r}.
\end{eqnarray*}
We use Lemma \ref{lema1-1} to obtain
\begin{eqnarray*}
\lefteqn{ E\left( \left(S_{T_{n}}\right)^{2p-2r}\cdot\left(1+\left|X^{M}_{\varepsilon}\left(\tau_{n}\right)\right|\right)^{2r}\right)}\\
  &\leq & \left(E\left(S_{T_{n}}\right)^{2p}\right)^{\frac{2p-2r}{2p}}\cdot \left(E\left(1+\left|X^{M}_{\varepsilon}\left(\tau_{n}\right)\right|\right)^{2p}\right)^{\frac{2r}{2p}}\\
  & \leq & C\cdot\left(E\left(S_{T_{n}}\right)^{2p}\right)^{\frac{2p-2r}{2p}}.  
\end{eqnarray*}
Thus
\begin{eqnarray*} 
\lefteqn{ E|S_{T_{n+1}}|^{2p}}\\
 &\leq &  E|S_{T_{n}}|^{2p} + C\cdot \varepsilon^{2}\cdot \sum_{r=1}^{p}{2p \choose 2r} \left(E|S_{T_{n}}|^{2p}\right)^{\frac{2p-2r}{2p}}\cdot \varepsilon^{4r}\\
 &\leq & E|S_{T_{n}}|^{2p} + C\cdot \varepsilon^{2}\cdot\left(\left(E|S_{T_{n}}|^{2p}\right)^{\frac{1}{2p}} + \varepsilon^{2}\right)^{2p}\\
 &\leq & E|S_{T_{n}}|^{2p}\cdot\left(1+C\cdot\varepsilon^{2}\right) + C\cdot\varepsilon^{4p+2}.
\end{eqnarray*} 
 Now observe that $T_{\lfloor 1/\varepsilon^{2}\rfloor} = N_{\varepsilon} +1$  and apply a discrete version of Gronwall's Lemma (see \cite{GronbachT:02}) to obtain
\begin{equation}\label{eqA2}
E\sup_{s\leq 1}\left|A_{1}\left(s\right)\right|^{2p} \leq C\cdot \varepsilon^{4p}.
\end{equation}
Finally, use (\ref{eqA1}) and (\ref{eqA2}) as well as Lemma \ref{lema1} to complete the proof.
\end{proof}
\begin{Thm}\label{theorem} Let $X$ be the solution of (\ref{eq1}). Then, for all $1\leq q<\infty$ we have
$$ \left(E\left\Vert X-X^{M}_{\varepsilon}\right\Vert _{L_{\infty}\left[0,1\right]}^{q}\right)^{1/q}\leq C\cdot \left(\varepsilon\cdot \ln\left(1/\varepsilon\right)\right)^{2}.$$
\end{Thm}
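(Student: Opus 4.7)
The plan is to combine an Itô–Taylor expansion of the true solution $X$ with the Milstein increment defining $X^{M}_{\varepsilon}$, isolate the contribution that is absorbed by a Gronwall argument, and control all remainder terms via Burkholder's inequality together with the preparatory Lemmas~\ref{lema1}, \ref{lema1-1}, and \ref{lema1-2}.

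First, for $t\in(\tau_{\ell-1},\tau_{\ell}]$ I write, with $e=X-X^{M}_{\varepsilon}$ and $U_{\ell-1}=(\tau_{\ell-1},X^{M}_{\varepsilon}(\tau_{\ell-1}))$,
$$e(t)=e(\tau_{\ell-1})+\int_{\tau_{\ell-1}}^{t}\bigl(a(s,X(s))-a(U_{\ell-1})\bigr)\,ds+\int_{\tau_{\ell-1}}^{t}\bigl(\sigma(s,X(s))-\sigma(U_{\ell-1})-(\sigma\cdot\sigma^{(0,1)})(U_{\ell-1})(W(s)-W(\tau_{\ell-1}))\bigr)\,dW(s),$$
after rewriting the Milstein correction as $(\sigma\cdot\sigma^{(0,1)})(U_{\ell-1})\int_{\tau_{\ell-1}}^{t}(W(s)-W(\tau_{\ell-1}))\,dW(s)$ by Itô's formula. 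Then I apply Itô's formula to $\sigma(s,X(s))$ and $a(s,X(s))$ on $[\tau_{\ell-1},s]$, split $\sigma(\tau_{\ell-1},X(\tau_{\ell-1}))-\sigma(U_{\ell-1})$ by the Lipschitz condition $\mathrm{(A)}$ into a term bounded by $|e(\tau_{\ell-1})|$, and observe that the leading stochastic part $(\sigma\cdot\sigma^{(0,1)})(U_{\ell-1})(W(s)-W(\tau_{\ell-1}))$ is cancelled by the Milstein correction up to another Lipschitz-in-$e$ remainder. The surviving terms are either Lebesgue integrals of order $\Delta_{\ell}$ with coefficients controlled via Lemma~\ref{lema1-1}, iterated stochastic integrals whose $L^{q}$-norms are accessible to Burkholder, or the special drift term $\int_{0}^{t}F(u)\,du$ arising from $a(s,X(s))-a(\tau_{\ell-1},X(\tau_{\ell-1}))\approx(a^{(0,1)}\cdot\sigma)(U_{\ell-1})(W(s)-W(\tau_{\ell-1}))$, which is precisely what Lemma~\ref{lema1-2} was set up to handle.

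Applying $E\sup_{s\le t}|\cdot|^{q}$ and combining Burkholder on the stochastic integrals, Hölder on the Lebesgue integrals, and the boundedness of $a^{(0,1)},\sigma^{(0,1)}$ from $\mathrm{(A)}$, I arrive at a recursive inequality
$$\phi(t):=E\sup_{s\le t}|e(s)|^{q}\le C\int_{0}^{t}\phi(u)\,du+R(\varepsilon),$$
where $R(\varepsilon)\le C\cdot(\varepsilon\cdot\ln(1/\varepsilon))^{2q}$ by combining Lemma~\ref{lema1} (which gives $(E\max_{\ell}\Delta_{\ell}^{p})^{1/p}\le C\cdot(\varepsilon\cdot\ln(1/\varepsilon))^{2}$ and thus absorbs the $\Delta_{\ell}$-type terms and all higher-order Itô remainders) with Lemma~\ref{lema1-2} (for the $F$-contribution). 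Gronwall's lemma then yields $\phi(1)\le C\cdot(\varepsilon\cdot\ln(1/\varepsilon))^{2q}$, which is the assertion.

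The main obstacle is the random step sizes. Because the $\tau_{\ell}$ are stopping times, one cannot sum $L^{q}$-errors over $\ell$ via Minkowski and must apply Burkholder to integrands supported on random intervals; the shifted-Brownian-filtration structure noted immediately before Lemma~\ref{lema1} is what makes this licit. A second subtle point is the drift-side integral of $F$, for which pointwise bounding would cost a factor proportional to $N_{\varepsilon}$ and would therefore spoil the target rate; only the near-martingale summation in $\ell$ exploited in Lemma~\ref{lema1-2} recovers the $\varepsilon^{2q}$ scale on that term. Once these two issues are handled, the remainder of the argument is a routine Itô–Taylor/Gronwall chase.
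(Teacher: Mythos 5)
Your proposal follows essentially the same route as the paper: the identical integral decomposition of $X-X^{M}_{\varepsilon}$ into a drift part and a stochastic part with the Milstein correction rewritten as an iterated It\^o integral, the same isolation of the problematic drift term $\int_{0}^{t}F(u)\,du$ handled by Lemma~\ref{lema1-2}, the same use of Lemmas~\ref{lema1} and~\ref{lema1-1} for the $\Delta_{\ell}$-type remainders, and the same Burkholder/H\"older estimates feeding a Gronwall argument. You also correctly identify the two genuine difficulties (random stopping-time partition and the near-martingale summation for $F$) that the paper's proof is built around, so there is nothing substantive to add.
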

\begin{proof}
We assume without loss of generality that $q=2p$ with $p\in\mathbb{N}$. 
Put $U_{\ell}=\bigl(\tau_{\ell},X^{M}_{\varepsilon}\bigl(\tau_{\ell}\bigr)\bigr)$.
We compute the difference $X\left(t\right)-X^{M}_{\varepsilon}\left(t\right)$
\begin{align*}
X\left(t\right)-X^{M}_{\varepsilon}\left(t\right) = \int_{0}^{t}\sum_{\ell=1}^{N_{\varepsilon}+1}\left(a\left(s,X\left(s\right)\right)-a\left(U_{\ell-1}\right)\right)\cdot\mathbf{1}_{\left]\tau_{\ell-1},\tau_{\ell}\right]}\left(s\right)\, ds\; + \qquad \qquad \quad \quad \qquad \qquad \quad  &  \qquad \qquad \quad \quad  \\
 \int_{0}^{t}\sum_{\ell=1}^{N_{\varepsilon}+1}\left(\sigma\left(s,X\left(s\right)\right)-\sigma\left(U_{\ell-1}\right)-\left(\sigma\cdot\sigma^{\left(0,1\right)}\right)\left(U_{\ell-1}\right)\cdot\left(W\left(s\right)-W\left(\tau_{\ell-1}\right)\right)\right)\cdot\mathbf{1}_{\left]\tau_{\ell-1},\tau_{\ell}\right]}\left(s\right)\, dW\left(s\right)&
\end{align*} 
Define \begin{eqnarray*}
Z\left(t\right) & = & \sup_{s\leq t}\left|X\left(s\right)-X^{M}_{\varepsilon}\left(s\right)\right|\\
V_{\ell}\left(s\right) & = & \left(a^{\left(0,1\right)}\cdot\sigma\right)\left(U_{\ell}\right)\cdot \left(W\left(s\right)-W\left(\tau_{\ell}\right)\right)\cdot\mathbf{1}_{\left]\tau_{\ell-1},\tau_{\ell}\right]}\left(s\right)\\
F\left(s\right) & = & \sum_{\ell=1}^{N_{\varepsilon}+1}V_{\ell-1}\left(s\right)\cdot\mathbf{1}_{\left]\tau_{\ell-1},\tau_{\ell}\right]}\left(s\right)\\
\check{V}_{\ell}\left(s\right) & = & \sigma^{\left(0,1\right)}\left(U_{\ell}\right)\cdot\left(X^{M}_{\varepsilon}\left(s\right)-X^{M}_{\varepsilon}\left(t_{\ell}\right)-a\left(U_{\ell}\right)\cdot\left(s-\tau_{\ell}\right)\right)\cdot\mathbf{1}_{\left]\tau_{\ell-1},\tau_{\ell}\right]}\left(s\right)\\
R_{\ell}\left(s\right) & = & \left(\sigma^{\left(0,1\right)}\cdot\sigma\right)\left(U_{\ell}\right)\cdot\int_{\tau_{\ell}}^{s}\sigma^{\left(0,1\right)}\left(U_{\ell}\right)\cdot\left(W\left(u\right)-W\left(\tau_{\ell}\right)\right)\, dW\left(u\right)\cdot\mathbf{1}_{\left]\tau_{\ell-1},\tau_{\ell}\right]}\left(s\right)\\
\check{F}\left(s\right) & = & \sum_{\ell=1}^{N_{\varepsilon}+1}R_{\ell-1}\left(s\right)\cdot\mathbf{1}_{\left]\tau_{\ell-1},\tau_{\ell}\right]}\left(s\right).\end{eqnarray*}
Similar to (\ref{eqHoelder}), we use Burkholder's and H\"older's inequalities
to obtain \begin{eqnarray*}
E\left(Z\left(t\right)\right)^{q} & \leq & C\cdot\Bigl[Et^{q-1}\int_{0}^{t}\left|\sum_{\ell=1}^{N_{\varepsilon}+1}\left(a\left(s,X\left(s\right)\right)-a\left(U_{\ell-1}\right)-V_{\ell-1}\left(s\right)\right)\cdot\mathbf{1}_{\left]\tau_{\ell-1},\tau_{\ell}\right]}\left(s\right)\right|^{q}\, ds\\
 &  & +E\sup_{s\leq t}\left|\int_{0}^{s}F\left(u\right)\, du\right|^{q}\\
 &  & +Et^{q/2-1}\cdot\int_{0}^{t}\left|\sum_{\ell=1}^{N_{\varepsilon}+1}\left(\sigma\left(s,X\left(s\right)\right)-\sigma\left(U_{\ell-1}\right)-\check{V}_{\ell-1}\left(s\right)\right)\cdot\mathbf{1}_{\left]\tau_{\ell-1},\tau_{\ell}\right]}\left(s\right)\right|^{q}\, ds\\
 &  & +Et^{q/2-1}\cdot\int_{0}^{t}\left|\check{F}\left(s\right)\right|^{q}\, ds\Bigr].\end{eqnarray*}
Using a similar argument as in proof of Proposition 1 in the Appendix of \cite{GronbachT:02},  
we get by Lemma \ref{lema1-1} that \begin{equation}
E \left( \left|\sigma\left(s,X\left(s\right)\right)-\sigma\left(U_{\ell-1}\right)-\check{V}_{\ell-1}\left(s\right)\right|^{q} \cdot\mathbf{1}_{\left]\tau_{\ell-1},\tau_{\ell}\right]}\left(s\right)\right)\leq C\cdot\left(E\Delta_{\ell}^{q}+E\left(Z\left(s\right)\right)^{q}\right)\label{proofeq1}\end{equation}
and \begin{equation}
E\left( \left|a\left(s,X\left(s\right)\right)-a\left(U_{\ell-1}\right)-V_{\ell-1}\left(s\right)\right|^{q}\cdot\mathbf{1}_{\left]\tau_{\ell-1},\tau_{\ell}\right]}\left(s\right)\right) \leq C\cdot \left(E\Delta_{\ell}^{q}+E\left(Z\left(s\right)\right)^{q}\right)\label{proofeq3}\end{equation} 
Furthermore, we have
\begin{eqnarray*}
\lefteqn {E \left(\left|R_{\ell-1}\left(s\right)\right|^{q}\cdot\mathbf{1}_{\left]\tau_{\ell-1},\tau_{\ell}\right]}\left(s\right)\right) }\\
& \leq & C\cdot E\left|\sigma\left(U_{\ell}\right)\right|^{q}\cdot E\sup_{\tau_{\ell-1}\leq s\leq \tau_{\ell}}\left| \left(\left(W\left(s\right)-W\left(\tau_{\ell-1}\right)\right)^{2}-\left(s-\tau_{\ell-1}\right)\right)\right|^{q}\\
 & \leq & C\cdot  E\max_{1\leq \ell\leq N_{\varepsilon}+1}\Delta_{\ell}^{q}
\end{eqnarray*}
 by the linear growth condition (\ref{growth}), Lemma \ref{lema1-1} and  Burkholder's inequality. Thus 
\begin{equation}
E\int_{0}^{t}\left|\check{F}\left(s\right)\right|^{q}\, ds\leq C\cdot t\cdot E\max_{1\leq \ell\leq N_{\varepsilon}+1}\Delta_{\ell}^{q}.\label{proofeq4}
\end{equation} 
Combining (\ref{proofeq1})-(\ref{proofeq4}) and Lemma \ref{lema1} as well as Lemma \ref{lema1-2}, we get \[
E\left(Z\left(t\right)\right)^{q}\leq C\cdot\left(t^{q/2-1}\int_{0}^{t}E\left(Z\left(s\right)\right)^{q}\, ds+\varepsilon^{2q}+ \left(\varepsilon\cdot \ln\left(1/\varepsilon\right)\right)^{2q}\right).\]
By (\ref{eqEX}) and (\ref{lem51}) we have \[  
E\left(Z\left(t\right)\right)^{q}<\infty\]
for all $t\in\left[0,1\right]$, and therefore, by Gronwall's Lemma\[
E\left(Z\left(t\right)\right)^{q}\leq C\cdot \left(\varepsilon\cdot\ln\left(1/\varepsilon\right)\right)^{2q}.\]
This concludes the proof of the Theorem.
\end{proof}
\begin{Cor}\label{kor1} For all $\kappa \in \left(0, 1\right)$ there exist a nonnegative random variable $\zeta_{\kappa}$ with $E\left(\left|\zeta_{\kappa}\right|^{q}\right) <\infty$ for all $1\leq q<\infty$ so that
$$ \left\Vert X-X^{M}_{\varepsilon}\right\Vert _{L_{\infty}\left[0,1\right]}\leq \zeta_{\kappa}\cdot \varepsilon^{2-\kappa}\qquad \mathrm{a.s.}$$
\end{Cor}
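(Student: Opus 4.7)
The plan is to upgrade the $L^{q}$ bound of Theorem \ref{theorem} to a pathwise almost sure bound by a Borel--Cantelli argument along the dyadic sequence $\varepsilon_{n}=2^{-n}$. Fix $\kappa \in (0,1)$. Markov's inequality combined with Theorem \ref{theorem} yields
\[
\mathbb{P}\!\left(\left\Vert X - X^{M}_{\varepsilon_{n}}\right\Vert_{L_{\infty}[0,1]} > \varepsilon_{n}^{2-\kappa}\right) \leq C_{q}\cdot\varepsilon_{n}^{q\kappa}\cdot\bigl(\ln(1/\varepsilon_{n})\bigr)^{2q},
\]
whose right-hand side is summable in $n$ for every $q\geq 1$. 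Hence on a probability-one event the normalized dyadic error $2^{n(2-\kappa)}\left\Vert X - X^{M}_{\varepsilon_{n}}\right\Vert_{L_{\infty}[0,1]}$ is eventually bounded.

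To exhibit an explicit random variable with all moments finite I would set
\[
\zeta_{\kappa} := \sup_{n\geq 1} 2^{n(2-\kappa)}\cdot\left\Vert X - X^{M}_{2^{-n}}\right\Vert_{L_{\infty}[0,1]}.
\]
Summing Theorem \ref{theorem} over the dyadic grid and pulling the supremum under the expectation gives
\[
E\zeta_{\kappa}^{q} \leq \sum_{n\geq 1} 2^{nq(2-\kappa)}\cdot E\left\Vert X - X^{M}_{2^{-n}}\right\Vert_{L_{\infty}[0,1]}^{q} \leq C'_{q}\sum_{n\geq 1} n^{2q}\cdot 2^{-nq\kappa} < \infty
\]
for every $q\geq 1$, so $\zeta_{\kappa}$ has the required integrability along the dyadic grid.

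The main obstacle is extending the bound from dyadic $\varepsilon = 2^{-n}$ to every $\varepsilon > 0$, since $\zeta_{\kappa}$ must dominate the normalized error simultaneously for all $\varepsilon$. I would exploit that the stopping times $\tau_{j,\varepsilon}$ defined in (\ref{stop1}) are pathwise monotone and continuous in $\varepsilon$, so that $\varepsilon\mapsto X^{M}_{\varepsilon}$ is continuous in $L_{\infty}[0,1]$ apart from the countably many values of $\varepsilon$ at which $N_{\varepsilon}$ jumps. For $\varepsilon\in (2^{-(n+1)},2^{-n}]$ the discretization of $X^{M}_{\varepsilon}$ is a refinement of that of $X^{M}_{2^{-n}}$, which allows one to bound $\left\Vert X - X^{M}_{\varepsilon}\right\Vert_{L_{\infty}[0,1]}$ by $\left\Vert X - X^{M}_{2^{-(n+1)}}\right\Vert_{L_{\infty}[0,1]}$ up to a universal constant, and to absorb this constant into $\zeta_{\kappa}$ after replacing $\kappa$ by $\kappa/2$ at the start. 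A more technical alternative is to redo the Appendix estimates pathwise, replacing Burkholder's inequality by a pathwise H\"older estimate for $W$ (using the random variable $\eta_{\kappa}$ from the proof of Theorem \ref{theorem1-4}) and the $L^{q}$ Gronwall step by its pathwise counterpart; the factor $\varepsilon^{-\kappa}$ is then precisely the price paid for absorbing the logarithmic term $\bigl(\ln(1/\varepsilon)\bigr)^{2q}$ into a power of $\varepsilon$.
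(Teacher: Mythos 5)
Your first two paragraphs reproduce exactly what the paper does: the paper's entire proof of this corollary is a citation of Lemma 2.1 in Kloeden and Neuenkirch \cite{KloedenNeuenkirch:07}, and the proof of that lemma is precisely your construction --- set $\zeta_{\kappa}$ equal to the supremum over a sequence of the normalized errors and bound $E\zeta_{\kappa}^{q}$ by the sum of the $q$-th moments, which Theorem \ref{theorem} makes summable because $2^{nq\left(2-\kappa\right)}\cdot n^{2q}\cdot 2^{-2nq}=n^{2q}\cdot 2^{-nq\kappa}$. For the sequence version of the statement your argument is complete, correct, and coincides with the paper's.

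Where you go beyond the paper, in the third paragraph, the key claim is false: for $\varepsilon<\varepsilon'$ the free-knot grid $\left(\tau_{j,\varepsilon}\right)_{j}$ is \emph{not} a refinement of $\left(\tau_{j,\varepsilon'}\right)_{j}$. One has $\tau_{1,\varepsilon}<\tau_{1,\varepsilon'}$, and $\tau_{1,\varepsilon'}$ is generically not among the knots $\tau_{j,\varepsilon}$, so the processes $X^{M}_{\varepsilon}$ and $X^{M}_{2^{-\left(n+1\right)}}$ are built on incomparable partitions and there is no elementary pathwise domination of one error by the other ``up to a universal constant''. If the corollary is read with $\varepsilon$ ranging over all of $\left(0,1\right)$, this leaves a genuine gap --- but it is a gap shared by the paper itself, since the cited lemma is likewise stated only for sequences; measured against the paper's own standard, your dyadic argument is all that is actually being claimed. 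To honestly cover a continuum of $\varepsilon$ one would have to follow your ``more technical alternative'' and redo the Appendix estimates pathwise, which neither you nor the paper carries out.
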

\begin{proof}
The assertions is a direct consequence from Theorem \ref{theorem} and Lemma 2.1 in \cite{KloedenNeuenkirch:07}.  
\end{proof}

\bibliographystyle{plain}
\bibliography{references}

\def\cprime{$'$}
\begin{thebibliography}{10}

\bibitem{Billingsley:68}
P.~Billingsley.
\newblock {\em Convergence of probability measures}.
\newblock John Wiley \& Sons Inc., New York, 1968.

\bibitem{CohenD'Ales:97}
A.~Cohen and J.-P. D'Ales.
\newblock Nonlinear approximation of random functions.
\newblock {\em SIAM J. Appl. Math.}, 57(2):518--540, 1997.

\bibitem{CohenDaubechiesGuleryuz:02}
A.~Cohen, I.~Daubechies, O.G. Guleryuz, and M.T. Orchard.
\newblock On the importance of combining wavelet-based nonlinear approximation
  with coding strategies.
\newblock {\em IEEE Trans. Inform. Theory}, 48(7):1895--1921, 2002.

\bibitem{Creutig:02}
J.~Creutzig.
\newblock Relations between classical, average, and probabilistic {K}olmogorov
  widths.
\newblock {\em J. Complexity}, 18(1):287--303, 2002.

\bibitem{CreutzigGronbachRitter:07}
J.~Creutzig, T.~M{\"u}ller-Gronbach, and K.~Ritter.
\newblock Free-knot spline approximation of stochastic processes.
\newblock {\em J. Complexity}, 23(4-6):867--889, 2007.

\bibitem{Devore:98}
R.~DeVore.
\newblock Nonlinear approximation.
\newblock In {\em Acta numerica, 1998}, volume~7 of {\em Acta Numer.}, pages
  51--150. Cambridge Univ. Press, Cambridge, 1998.

\bibitem{Ehlenz:08}
T.~Ehlenz.
\newblock Free-knot linear interpolation of the brownian motion.
\newblock {\em Diploma thesis, TU Darmstadt}, 2008.

\bibitem{Faure:01}
O.~Faure.
\newblock Simulation du mouvement brownien et des diffusions equations.
\newblock {\em Th{\`e}se, ENPC, Paris}, 1990.

\bibitem{Grill:87}
K.~Grill.
\newblock On the rate of convergence in strassen's law of the iterated
  logarithm.
\newblock {\em Probab. Theory Related Fields}, 74(4):583--589, 1987.

\bibitem{HofmannGronbachRitter:01}
N.~Hofmann, T.~M{\"u}ller-Gronbach, and K.~Ritter.
\newblock The optimal discretization of stochastic differential equations.
\newblock {\em J. Complexity}, 17(1):117--153, 2001.

\bibitem{KaratzasShreve:88}
I.~Karatzas and S.E. Shreve.
\newblock {\em Brownian motion and stochastic calculus}, volume 113 of {\em
  Graduate Texts in Mathematics}.
\newblock Springer-Verlag, New York, 1988.

\bibitem{KloedenNeuenkirch:07}
P.~E. Kloeden and A.~Neuenkirch.
\newblock The pathwise convergence of approximation schemes for stochastic
  differential equations.
\newblock {\em LMS J. Comput. Math.}, 10:235--253, 2007.

\bibitem{konPlaskota:05}
M.~Kon and L.~Plaskota.
\newblock Information-based nonlinear approximation: an average case setting.
\newblock {\em J. Complexity}, 21(2):211--229, 2005.

\bibitem{Mairove:93}
V.E. Maiorov.
\newblock Average {$n$}-widths of the {W}iener space in the {$L_\infty$}-norm.
\newblock {\em J. Complexity}, 9(2):222--230, 1993.
\newblock Festschrift for Joseph F. Traub, Part II.

\bibitem{Mairove:96}
V.E. Maiorov.
\newblock Widths and distributions of values of the approximation functional on
  the {S}obolev spaces with measure.
\newblock {\em Constr. Approx.}, 12(4):443--462, 1996.

\bibitem{Gronbach:02}
T.~M{\"u}ller-Gronbach.
\newblock The optimal uniform approximation of systems of stochastic
  differential equations.
\newblock {\em Ann. Appl. Probab.}, 12(2):664--690, 2002.

\bibitem{GronbachT:02}
T.~M{\"u}ller-Gronbach.
\newblock Strong approximation of systems of stochastic differential equations.
\newblock {\em Habilitationsschrift, TU Darmstadt}, 2002.

\bibitem{Slassi:12}
M.~Slassi.
\newblock A milstein-based free knot spline approximation for stochastic
  differential equations.
\newblock {\em J. Complexity}, 28(1):37--47, 2012.

\bibitem{WellnerVanderVaart:96}
A.W. Van~der Vaart and J.A. Wellner.
\newblock {\em Weak Convergence and Empirical Processes}.
\newblock Springer-Verlag, New York, 1996.

\end{thebibliography}


\end{document}